\theoremstyle{plain}
\newtheorem{Thm}{Theorem}
\newtheorem{Cor}[Thm]{Corollary}
\newtheorem{Prop}[Thm]{Proposition}
\newtheorem{Rk}[Thm]{Remark}
\begin{document}
\large

\title[sharp thresholds for schr\"{o}dinger system]
{Sharp thresholds of blow-up and global existence for the coupled
nonlinear Schr\"{o}dinger system}
\author{ Li Ma and Lin Zhao}
\address{Li Ma, Department of Mathematical Sciences, Tsinghua University,
 Peking 100084, P. R. China}

\email{lma@math.tsinghua.edu.cn}

\thanks{The research is partially supported by the National Natural Science
Foundation of China 10631020 and SRFDP 20060003002}

\begin{abstract}

In this paper, we establish two new types of invariant sets for the
coupled nonlinear Schr\"{o}dinger system on $\mathbb{R}^n$, and
derive two sharp thresholds of blow-up and global existence for its
solution. Some analogous results for the nonlinear Schr\"{o}dinger
system posed on the hyperbolic space $\mathbb{H}^n$ and on the
standard 2-sphere $\mathbb{S}^2$ are also presented. Our arguments
and constructions are improvements of some previous works on this
direction. At the end, we give some heuristic analysis about the
strong instability of the solitary waves.\\

{\bf Keywords: Coupled Schr\"{o}dinger system, Sharp
thresholds.}\\

{\bf AMS Classification: Primary 35Jxx,53}

\end{abstract}

\maketitle
\date{4-16-2007}

\section{Introduction}

In this paper, we establish two new types of invariant sets for the
$N$-coupled nonlinear Schr\"{o}dinger system on $\mathbb{R}^n$ given
by
\begin{align}
\label{system}\left\{\begin{array}{ll}
-i\partial_t\phi_j=\Delta\psi_j+\mu_j|\phi_j|^{p-1}\phi_j+\sum_{i\neq
j}\beta_{ij}|\phi_i|^{(p+1)/2}|\phi_j|^{(p-3)/2}\phi_j,\\
\phi_j=\phi_j(t,x)\in \mathbb{C},\ \ x\in \mathbb{R}^n,\ \ t>0,\ \ j=1,...,N,\\
\phi_j(0,x)=\phi_{0j}(x),\ \ \phi_{0j}: \mathbb{R}^n\rightarrow
\mathbb{C},
\end{array}
\right.
\end{align}
where $1\leq p<1+4/(n-2)^+$ (we use the convention:
$4/(n-2)^+=\infty$ when $n=1,2$, and $(n-2)^+=n-2$ when $n\geq 3$),
$\mu_j>0$'s are positive constants and $\beta_{ij}$'s are coupling
constants subjected to $\beta_{ij}=\beta_{ji}$. Based on our new
invariant sets, we then derive two sharp thresholds of blow-up and
global existence for the solutions. We point out that our results
have no restriction on the dimension $n$, which plays an important
role in the previous related studies \cite{H.Be}. We also give the
sharp thresholds when (\ref{system}) is considered on the hyperbolic
space $\mathbb{H}^n$ and on the standard 2-sphere $\mathbb{S}^2$.
These results rely heavily on the geometric structure of the
manifolds and behave very differently from the ones considered on
$\mathbb{R}^n$. At the end, we give some heuristic analysis about
the strong instability of the solitary waves.

The system (\ref{system}) has applications in many physical
problems, especially in nonlinear optics. Physically, the solution
$\phi_j$ denotes the $j^{th}$ component of the beam in Kerr-like
photo-refractive media (cf.\cite{N.A}). The positive constant
$\mu_j$ is for self-focusing in the $j^{th}$ component of the beam.
The coupling constant $\beta_{ij}$ is the interaction between the
$i^{th}$ and the $j^{th}$ component of the beam. We refer to
\cite{H.B} for more precision on the meaning of the constants. When
the spatial dimension $n\leq 3$, there are many analytical and
numerical results on the system. We shall quote the recent works
\cite{KW.C,FT.H,T.K,TC.L1,TC.L2,B.S}, where a comprehensive list of
references on this subject can be found. However, there are few
works describing the blow-up phenomena of the solution. Hereafter,
we focus on the blow-up analysis for the system (\ref{system}) when
$\beta_{ij}=\beta_{ji}$. For notational simplicity, we write
$\Phi_0=(\phi_{01},...,\phi_{0N})$ as the initial data and
$\Phi=(\phi_{1},...,\phi_{N})$ as the solution. We denote
$$\|\Phi\|_p:=(\sum_{j=1}^N\int_{\mathbb{R}^n} |\phi_j|^p)^{1/p}$$
for $1\leq p<\infty$ and
$$\|\nabla\Phi\|_2:=(\sum_{j=1}^N\int_{\mathbb{R}^n}
|\nabla\phi_j|^2)^{1/2}.$$ We define the testing functional
$$
\mathcal{P}(\Phi):=\sum_{j=1}^N\mu_j\int_{\mathbb{R}^n}|\phi_j|^{p+1}+
\sum_{i,j=1}^N\beta_{ij}\int_{\mathbb{R}^n}|\phi_i|^{(p+1)/2}|\phi_j|^{(p+1)/2}.
$$

The local existence theorem for the single Schr\"{o}dinger equation
in $H^1(\mathbb{R}^n)$ (see \cite{T.C,J.G}) still holds true for the
Schr\"{o}dinger system (\ref{system}). In fact, by solving the
equivalent integral system
\begin{align*}
\phi_j&=e^{it\Delta}\phi_{0j}+i\mu_j\int_0^te^{i(t-s)\Delta}|\phi_j|^{p-1}\phi_j(s)ds\\
&\quad+i\sum_{i\neq
j}\beta_{ij}\int_0^te^{i(t-s)\Delta}|\phi_i|^{(p+1)/2}|\phi_j|^{(p-3)/2}\phi_j(s)ds
\end{align*}
in the space
$$
(H^1(\mathbb{R}^n))^N=\underbrace{H^1(\mathbb{R}^n)\times...\times
H^1(\mathbb{R}^n)}_{N}
$$
with a standard Picard iteration method as in
\cite{T.C,J.G}, one gets easily the following proposition.

\begin{Prop} (Local Existence)\label{localprop}
Assume that $1\leq p<1+4/(n-2)^+$. Then for any $\Phi_0\in
(H^1(\mathbb{R}^n))^N$, there exists a $T>0$ and a unique solution
$\Phi\in \mathcal{C}([0,T),(H^1(\mathbb{R}^n))^N)$ such that either
$T=\infty$ or else $T<\infty$ and
$\|\nabla\Phi\|_2\rightarrow\infty$ as $t\rightarrow T$.
\end{Prop}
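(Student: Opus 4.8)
The plan is to establish local existence via a standard fixed-point argument applied to the equivalent integral (Duhamel) formulation already written out in the excerpt.
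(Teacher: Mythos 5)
Your plan coincides with the paper's own argument: the authors likewise rewrite (\ref{system}) as the equivalent Duhamel integral system and invoke a standard Picard (contraction mapping) iteration in $(H^1(\mathbb{R}^n))^N$, exactly as in the single-equation theory of Cazenave--Weissler and Ginibre--Velo, to obtain the local solution and the blow-up alternative. This is the same approach, so nothing further is needed beyond carrying out the standard estimates under the subcriticality assumption $1\leq p<1+4/(n-2)^+$.
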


When $\beta_{ij}=\beta_{ji}$, the system (\ref{system}) admits the
mass and the energy conservation laws in the space
$(H^1(\mathbb{R}^n))^N$, which are stated in (\ref{M}) and (\ref{E})
below.

\emph{Mass ($L^2$ norm):}
\begin{align}
M(\Phi):=\|\Phi\|_2=M(\Phi_0);\label{M}
\end{align}

\emph{Energy:}
\begin{align}
E(\Phi):=\frac{1}{2}\|\nabla\Phi\|_2^2-\frac{1}{p+1}\mathcal{P}(\Phi)=E(\Phi_0).\label{E}
\end{align}

Furthermore, let $\rho>0$ be a $\mathcal{C}^4$ real function on
$\mathbb{R}^n$ (independent of $t$), and then for
$$J(t):=\sum_{j=1}^N\int_{\mathbb{R}^n} \rho(x)|\phi_j(t,x)|^2,$$
we have
\begin{align}
J'(t)=2\Im\sum_{j=1}^N\int_{\mathbb{R}^n}(\nabla\phi_j\cdot\nabla\rho)\bar{\phi_j}\label{J'}
\end{align}
and
\begin{align}
J^{''}(t)&=4\sum_{j=1}^N\int_{\mathbb{R}^n}
D^2\rho(\nabla\phi_j,\nabla\bar{\phi_j})-\sum_{j=1}^N\int_{\mathbb{R}^n}
(\Delta^2\rho)|\phi_j|^2\label{J"}\\
&\quad-2\frac{p-1}{p+1}\sum_{j=1}^N\mu_j\int_{\mathbb{R}^n}
(\Delta\rho)|\phi_j|^{p+1}\nonumber\\
&\quad-2\frac{p-1}{p+1}\sum_{i,j=1}^N\beta_{ij}\int_{\mathbb{R}^n}
(\Delta\rho)|\phi_i|^{(p+1)/2}|\phi_j|^{(p+1)/2}\nonumber
\end{align}
under the assumption $\beta_{ij}=\beta_{ji}$. Especially, if we
choose $\rho(x)=|x|^2$ (see \cite{Gl} and \cite{Og1}), we then get
that
\begin{align*}
J'(t)=4\Im\sum_{j=1}^N\int_{\mathbb{R}^n}(\nabla\phi_j\cdot
x)\bar{\phi_j}
\end{align*}
and
\begin{align*}
J^{''}(t)=16Q(\Phi),
\end{align*}
where
\begin{align}
Q(\Phi):=\frac{1}{2}\|\nabla\Phi\|_2^2-\frac{n(p-1)}{4(p+1)}\mathcal{P}(\Phi).\label{Q}
\end{align}

Applying the classical energy argument, one has for $p<1+4/n$, the
solution of (\ref{system}) exists globally. In fact, assuming
$|E(\Phi_0)|<\infty$ and thanks to the Gagliardo-Nirenberg
inequality on $\mathbb{R}^n$, we find from the energy conservation
law that
$$
\|\nabla\Phi\|_2^2\leq
2E(\Phi_0)+C\|\nabla\Phi\|_2^{n(p-1)/2}\|\Phi\|_2^{p+1-n(p-1)/2}.
$$
Clearly an uniform bound on $\|\nabla\Phi\|_2$ results, provided
$p<1+4/n$, and accordingly the solution exists globally. For $p\geq
1+4/n$, blow-up of the solution may occur. In fact, if there exists
a constant $\delta<0$ such that $Q(\Phi)\leq \delta <0$ or
$Q(\Phi)<0$ and $J'(0)\leq 0$ simultaneously, it's obvious from the
facts $J^{''}(t)\leq 16\delta<0$ or $J'(0)\leq 0$ and $J^{''}(t)<0$
that the solution blows up in finite time.

In the case $p\geq 1+4/n$, the sharp thresholds of blow-up and
global existence become very interesting. For the single
Schr\"{o}dinger equation, the sharp thresholds of blow-up and global
existence have been extensively studied (see the related works
\cite{H.Be,Y.L,W.S,J.Z}). Our present work in this paper is to
derive two types of sharp thresholds for the system (\ref{system}).
To our knowledge, these are the first results in this direction for
the system (\ref{system}), which seem new even for the single
nonlinear Schrodinger equation on $\mathbb{R}^n$. See Theorems
\ref{Ithm}, \ref{IIthm} below.

Recall that we have defined the $\mathcal{C}^0$ functionals $M({\bf
u})$, $E({\bf u})$ and $Q({\bf u})$ for ${\bf u}=(u_1,...,u_N)\in
(H^1(\mathbb{R}^n))^N$ in (\ref{M}), (\ref{E}) and (\ref{Q}).

\begin{Thm} (Sharp Threshold I)\label{Ithm}
Assume that $1+4/n\leq p<1+4/(n-2)^+$. The constrained variational
problem
$$
d_I:=\inf_{\{{\bf u}\in (H^1(\mathbb{R}^n))^N\setminus\{0\};\ G({\bf
u})=0\}} \frac{1}{2}\|\nabla{\bf u}\|_2^2
$$
with
\begin{align*}
G({\bf u})=(M({\bf
u}))^{p+1-n(p-1)/2}-\frac{1}{p+1}\mathcal{P}(\Phi)
\end{align*}
satisfies $d_I>0$. Besides, assume the initial data $\Phi_0\in
(H^1(\mathbb{R}^n))^N$ satisfies
$$
(M(\Phi_0))^{p+1-n(p-1)/2}+E(\Phi_0)<d_I.
$$
We have:\\
(A). If $G(\Phi_0)>0$, then the solution exists globally;\\
(B). If $G(\Phi_0)<0$, $|x|\Phi_0(x)\in (L^2(\mathbb{R}^n))^N$, and
$$ \Im\sum_{j=1}^N\int_{\mathbb{R}^n}(\nabla\phi_{0j}\cdot
x)\bar{\phi}_{0j}\leq 0$$ when $p>1+4/n$, then the solution blows up
in finite time.
\end{Thm}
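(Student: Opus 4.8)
The plan is to deduce both parts from a pair of sign-defined invariant sets. Throughout write $s:=n(p-1)/2$, so the hypothesis $1+4/n\le p<1+4/(n-2)^+$ gives $s\ge 2$ and $p+1-s>0$ (the latter being exactly the $H^1$-subcritical condition $p<1+4/(n-2)^+$); for ${\bf u}\in(H^1(\mathbb{R}^n))^N$ abbreviate $A:=\tfrac12\|\nabla{\bf u}\|_2^2$, $B:=\tfrac{1}{p+1}\mathcal{P}({\bf u})$ and $m:=(M({\bf u}))^{p+1-s}$, so that $G({\bf u})=m-B$, $E({\bf u})=A-B$ and $Q({\bf u})=A-\tfrac{s}{2}B$. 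First I would record the vector Gagliardo--Nirenberg bound $\mathcal{P}({\bf u})\le C\|\nabla{\bf u}\|_2^{\,s}\|{\bf u}\|_2^{\,p+1-s}$, got by estimating each diagonal term by the scalar inequality and each cross term $\int|u_i|^{(p+1)/2}|u_j|^{(p+1)/2}$ by $\tfrac12(\int|u_i|^{p+1}+\int|u_j|^{p+1})$. On the constraint set $G({\bf u})=0$ this reads $m=B\le\tfrac{C}{p+1}\|\nabla{\bf u}\|_2^{\,s}\,m$, whence $\|\nabla{\bf u}\|_2^{\,s}\ge(p+1)/C$ uniformly, giving $\tfrac12\|\nabla{\bf u}\|_2^2\ge\tfrac12((p+1)/C)^{2/s}>0$ and hence $d_I>0$.

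The engine of both parts is the conserved number $N:=(M(\Phi_0))^{p+1-s}+E(\Phi_0)$ together with the identity $G(\Phi)=N-\tfrac12\|\nabla\Phi\|_2^2$, which follows at once from $G=m-B$, $E=A-B$ and mass/energy conservation. I would then show that each of $\{N<d_I,\,G>0\}$ and $\{N<d_I,\,G<0\}$ is invariant: since $N$ is conserved, a trajectory can exit only through $G=0$, and at a first such time $t_0$ one has $\Phi(t_0)\ne0$ (as $M$ is conserved and $\Phi_0\ne0$), so $\tfrac12\|\nabla\Phi(t_0)\|_2^2\ge d_I$ by definition of $d_I$, while the identity forces $\tfrac12\|\nabla\Phi(t_0)\|_2^2=N<d_I$, a contradiction. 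Part (A) is then immediate: if $G(\Phi_0)>0$ the trajectory stays in $\{G>0\}$, so $\tfrac12\|\nabla\Phi\|_2^2=N-G(\Phi)<N<d_I$ is a uniform bound, and global existence follows from the blow-up alternative in Proposition \ref{localprop}.

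For part (B) the heart of the matter is to show $Q(\Phi)<0$ on the invariant set $\{N<d_I,\,G<0\}$; the virial identity $J''=16Q$ then produces blow-up. Here I would use the mass-preserving scaling ${\bf u}_\lambda(x)=\lambda^{n/2}{\bf u}(\lambda x)$, under which $M$ is fixed, $A({\bf u}_\lambda)=\lambda^2A$ and $B({\bf u}_\lambda)=\lambda^{s}B$. Setting $\Psi(\lambda):=N({\bf u}_\lambda)=m+\lambda^2A-\lambda^{s}B$, one has $\Psi(1)=N<d_I$ and $\Psi'(1)=2A-sB=2Q({\bf u})$, while at the unique $\lambda^\ast=(m/B)^{1/s}\in(0,1)$ (which exists precisely because $G({\bf u})<0$) the scaled datum lies on $\{G=0\}$, so $\Psi(\lambda^\ast)=\tfrac12\|\nabla{\bf u}_{\lambda^\ast}\|_2^2\ge d_I>\Psi(1)$. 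For $s>2$ the function $\Psi$ increases then decreases with a single interior maximum, so the inequality $\Psi(\lambda^\ast)>\Psi(1)$ with $\lambda^\ast<1$ forces $\lambda=1$ past the maximum, i.e. $\Psi'(1)<0$ and $Q({\bf u})<0$. For $s=2$ one has $\Psi(\lambda)=m+\lambda^2(A-B)$ and $Q=E$, and $\Psi(\lambda^\ast)>\Psi(1)$ reduces to $(A-B)(m-B)>0$ with $m-B<0$, hence $E=A-B<0$.

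Consequently, when $p=1+4/n$ (so $s=2$) the bound $Q\le E(\Phi_0)<0$ is uniform and $J''\le 16E(\Phi_0)<0$ alone gives finite-time blow-up; when $p>1+4/n$ (so $s>2$) we obtain only the pointwise sign $Q(\Phi(t))<0$, and the extra hypothesis $J'(0)\le0$, i.e. $\Im\sum_{j=1}^N\int_{\mathbb{R}^n}(\nabla\phi_{0j}\cdot x)\bar{\phi}_{0j}\le0$, is exactly what runs the concavity argument: $J'$ is strictly decreasing and $\le 0$, so $J$ is driven to $0$ in finite time against $J\ge0$. I expect the scaling analysis establishing $Q<0$ on $\{G<0\}$ --- in particular locating $\lambda=1$ relative to the maximizer of $\Psi$ --- to be the main obstacle; the invariance and global-existence steps are routine once $G=N-\tfrac12\|\nabla\Phi\|_2^2$ is in hand, and the split $s=2$ versus $s>2$ is precisely what accounts for the sign condition on $J'(0)$ appearing only in the latter range.
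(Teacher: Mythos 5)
Your proposal is correct, and its overall architecture coincides with the paper's: Gagliardo--Nirenberg gives $d_I>0$; the conservation of $M$ and $E$ plus the minimality of $d_I$ make $\{G>0\}$ and $\{G<0\}$ invariant under the constraint $(M)^{p+1-n(p-1)/2}+E<d_I$ (your identity $G(\Phi)=N-\tfrac12\|\nabla\Phi\|_2^2$ is exactly the mechanism behind the paper's Proposition \ref{inv}, just written more transparently); part (A) follows from the resulting uniform gradient bound; part (B) from the virial identity. The one place you genuinely diverge is the key step of part (B), extracting $Q(\Phi)<0$ (resp.\ $E(\Phi_0)<0$ when $p=1+4/n$) from $G(\Phi)<0$. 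The paper rescales by a scalar multiple $\lambda\Phi$, $\lambda\in(0,1)$, chosen so that $G(\lambda\Phi)=0$, feeds the resulting relation into $\tfrac12\|\nabla(\lambda\Phi)\|_2^2\ge d_I>(M(\Phi))^{p+1-n(p-1)/2}+E(\Phi)$, and then checks by hand that the explicit coefficient
$h(\lambda)=\frac{\lambda^{-n(p-1)/2}}{1-\lambda^2}\bigl(1-\lambda^{n(p-1)/2}-\tfrac{n}{4}(p-1)(1-\lambda^2)\bigr)$
is negative on $(0,1)$. You instead use the mass-preserving dilation $\lambda^{n/2}{\bf u}(\lambda\cdot)$ and exploit the unimodality of $\Psi(\lambda)=m+\lambda^2A-\lambda^sB$ for $s>2$, so that $\Psi(\lambda^\ast)\ge d_I>\Psi(1)$ with $\lambda^\ast<1$ forces $\Psi'(1)=2Q<0$; your $s=2$ case reduces to the same computation as the paper's Case (i). Both routes are sound: yours trades the elementary but fiddly verification of $h(\lambda)<0$ (equivalently $1-\lambda^{s}<\tfrac{s}{2}(1-\lambda^2)$ on $(0,1)$) for a soft monotonicity argument, and keeping the mass fixed under the scaling means only the kinetic and potential terms move, which is what makes the sign of $\Psi'(1)$ read off directly as the sign of $Q$.
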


\begin{Thm} (Sharp Threshold II)\label{IIthm}
Assume that $1+4/n<p<1+4/(n-2)^+$. Let $\gamma>0$ be any fixed
constant. The constrained variational problem
$$
d_{II}:=d_{II}(\gamma)=\inf_{\{{\bf u}\in
(H^1(\mathbb{R}^n))^N\setminus\{0\};\ Q({\bf u})=0\}} (M({\bf
u}))^\gamma+E({\bf u})
$$
satisfies $d_{II}>0$. Besides, assume the initial data $\Phi_0\in
(H^1(\mathbb{R}^n))^N$ satisfies
$$
(M(\Phi_0))^\gamma+E(\Phi_0)<d_{II},
$$
then we have\\
(A). If $Q(\Phi_0)>0$, the solution exists globally;\\
(B). If $Q(\Phi_0)<0$ and $|x|\Phi_0(x)\in (L^2(\mathbb{R}^n))^N$,
the solution blows up in finite time.
\end{Thm}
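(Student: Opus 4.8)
The plan is to organize the proof around two invariant sets and the $L^2$-invariant scaling ${\bf u}_\lambda(x)=\lambda^{n/2}{\bf u}(\lambda x)$, which fixes $M$, multiplies $\|\nabla{\bf u}\|_2^2$ by $\lambda^2$ and $\mathcal{P}({\bf u})$ by $\lambda^{s}$, where $s:=n(p-1)/2$. The hypothesis $1+4/n<p<1+4/(n-2)^+$ is exactly $2<s<p+1$, i.e. $s-2>0$ and $p+1-s>0$; these sign conditions drive every step.

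First I would prove $d_{II}>0$. On the constraint $Q({\bf u})=0$ one has $\tfrac12\|\nabla{\bf u}\|_2^2=\tfrac{s}{2(p+1)}\mathcal{P}({\bf u})$, whence a short computation gives $E({\bf u})=\tfrac{s-2}{2s}\|\nabla{\bf u}\|_2^2>0$. Applying the Gagliardo--Nirenberg inequality $\mathcal{P}({\bf u})\le C\|\nabla{\bf u}\|_2^{s}\|{\bf u}\|_2^{p+1-s}$ (the cross terms handled by Young's inequality) to the constraint yields $\|\nabla{\bf u}\|_2^{s-2}\|{\bf u}\|_2^{p+1-s}\ge c_0>0$. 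If a minimizing sequence had $(M)^\gamma+E\to 0$, then both $\|{\bf u}_k\|_2\to 0$ and $\|\nabla{\bf u}_k\|_2\to 0$ (each summand being nonnegative), and since $s-2>0$, $p+1-s>0$, this would force the left side of the last inequality to $0$, a contradiction. Hence $d_{II}>0$.

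Next, the invariant sets $K^\pm=\{{\bf u}:(M({\bf u}))^\gamma+E({\bf u})<d_{II},\ \pm Q({\bf u})>0\}$. Since $M$ and $E$ are conserved, $(M(\Phi(t)))^\gamma+E(\Phi(t))$ stays $<d_{II}$ along the flow, so only the sign of $Q$ could change. If $Q(\Phi(t))$ vanished at a first time $t_0$, then $\Phi(t_0)\neq 0$ (as $M$ is conserved and $\Phi_0\neq 0$) is admissible for $d_{II}$, giving $(M(\Phi(t_0)))^\gamma+E(\Phi(t_0))\ge d_{II}$, contradicting the conserved bound; thus $K^+,K^-$ are invariant. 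For part (A) I would use the identity $\|\nabla\Phi\|_2^2=\tfrac{2n(p-1)}{n(p-1)-4}E(\Phi)-\tfrac{8}{n(p-1)-4}Q(\Phi)$, obtained by eliminating $\mathcal{P}$ between $E$ and $Q$; on $K^+$ it shows $E(\Phi_0)>0$ and $\|\nabla\Phi(t)\|_2^2\le\tfrac{2n(p-1)}{n(p-1)-4}E(\Phi_0)$, a bound uniform in $t$, so the blow-up alternative of Proposition \ref{localprop} gives global existence.

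The main work is part (B), where I would prove the uniform estimate $Q({\bf u})<(M({\bf u}))^\gamma+E({\bf u})-d_{II}$ for every ${\bf u}$ with $Q({\bf u})<0$. Writing $g(\lambda)=(M({\bf u}))^\gamma+E({\bf u}_\lambda)$, one computes $g'(\lambda)=\tfrac{2}{\lambda}Q({\bf u}_\lambda)$, and since $Q({\bf u})<0$ forces $\mathcal{P}({\bf u})>0$, the map $\lambda\mapsto Q({\bf u}_\lambda)$ has a unique zero $\lambda^*\in(0,1)$. As ${\bf u}_{\lambda^*}$ is admissible, $g(\lambda^*)\ge d_{II}$; evaluating $E$ on $Q=0$ gives $g(\lambda^*)=(M({\bf u}))^\gamma+\tfrac{(s-2)(\lambda^*)^s}{2(p+1)}\mathcal{P}({\bf u})$, and since $(\lambda^*)^s<1$ this yields $(M({\bf u}))^\gamma+\tfrac{s-2}{2(p+1)}\mathcal{P}({\bf u})>d_{II}$. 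A direct computation gives $Q({\bf u})-[(M({\bf u}))^\gamma+E({\bf u})]=-(M({\bf u}))^\gamma-\tfrac{s-2}{2(p+1)}\mathcal{P}({\bf u})$, so the claimed estimate follows. Along the flow, conservation then gives $Q(\Phi(t))<(M(\Phi_0))^\gamma+E(\Phi_0)-d_{II}=:-\delta<0$ uniformly; with $\rho=|x|^2$ the virial identity yields $J''(t)=16Q(\Phi(t))<-16\delta$, so the nonnegative $J(t)$ would become negative in finite time, forcing $T<\infty$. The delicate point is precisely this scaling comparison with the variational level $d_{II}$, which upgrades $Q(\Phi)<0$ to the uniform gap $Q(\Phi)\le-\delta$; it is where the strict energy hypothesis and the subcriticality $s>2$ enter.
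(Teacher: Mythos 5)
Your proposal is correct and follows essentially the same strategy as the paper: positivity of $d_{II}$ by contradiction using the Gagliardo--Nirenberg inequality on the constraint $Q=0$, invariance of the sets $K^{\pm}$ from the conservation laws, a uniform bound on $\|\nabla\Phi\|_2$ for part (A), and for part (B) a rescaling onto the constraint manifold $\{Q=0\}$ combined with the minimality of $d_{II}$ to obtain the uniform gap $Q(\Phi)\leq (M(\Phi_0))^\gamma+E(\Phi_0)-d_{II}<0$, followed by the virial argument. The only difference is cosmetic: you use the $L^2$-invariant dilation $\lambda^{n/2}{\bf u}(\lambda x)$ where the paper scales the amplitude $\lambda{\bf u}$; both yield the same key inequality.
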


As corollaries, we invoke the sharp thresholds to obtain small data
criterions for the global existence of (\ref{system}). We get the
following two results.

\begin{Cor} (Small Data Criterion I)\label{Icor}
Assume that $1+4/n\leq p<1+4/(n-2)^+$. Then if the initial data
$\Phi_0\in (H^1(\mathbb{R}^n))^N$ satisfies
$$
\frac{1}{2}\|\nabla\Phi_0\|^2_2+(M(\Phi_0))^{p+1-n(p-1)/2}<d_I,
$$
the solution of (\ref{system}) exists globally.
\end{Cor}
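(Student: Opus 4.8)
The plan is to deduce this corollary directly from Sharp Threshold I (Theorem \ref{Ithm}, part (A)), which already yields global existence once its two hypotheses are verified for the given $\Phi_0$: the energy--threshold bound $(M(\Phi_0))^{\sigma}+E(\Phi_0)<d_I$, where $\sigma:=p+1-n(p-1)/2$, and the sign condition $G(\Phi_0)>0$. Since the corollary's hypothesis carries $\frac12\|\nabla\Phi_0\|_2^2$ in place of $E(\Phi_0)$, the whole argument reduces to two comparisons bridging these quantities. Note first that $\sigma>0$ on the admissible range $1+4/n\le p<1+4/(n-2)^{+}$ (it equals $4/n$ at the lower endpoint and stays positive up to, but not including, the Sobolev exponent), so $(M(\Phi_0))^\sigma>0$ unless $\Phi_0\equiv 0$, which is the trivial case.

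The main step is to verify $G(\Phi_0)>0$, and I would prove the contrapositive assertion that every nonzero ${\bf u}$ with $G({\bf u})\le 0$ satisfies $\frac12\|\nabla{\bf u}\|_2^2\ge d_I$. Introduce the $L^2$--invariant dilation ${\bf u}^\lambda(x):=\lambda^{n/2}{\bf u}(\lambda x)$, $\lambda>0$, under which $M({\bf u}^\lambda)=M({\bf u})$, $\|\nabla{\bf u}^\lambda\|_2^2=\lambda^2\|\nabla{\bf u}\|_2^2$, and $\mathcal{P}({\bf u}^\lambda)=\lambda^{n(p-1)/2}\mathcal{P}({\bf u})$; consequently $G({\bf u}^\lambda)=(M({\bf u}))^\sigma-\frac{\lambda^{n(p-1)/2}}{p+1}\mathcal{P}({\bf u})$. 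If $G({\bf u})\le 0$ and ${\bf u}\neq 0$, then $\mathcal{P}({\bf u})\ge (p+1)(M({\bf u}))^\sigma>0$, so there is a unique $\lambda^\ast\in(0,1]$ with $G({\bf u}^{\lambda^\ast})=0$, determined by $(\lambda^\ast)^{n(p-1)/2}=(p+1)(M({\bf u}))^\sigma/\mathcal{P}({\bf u})$. As ${\bf u}^{\lambda^\ast}$ is then an admissible competitor in the variational problem defining $d_I$, the infimum gives $d_I\le \frac12\|\nabla{\bf u}^{\lambda^\ast}\|_2^2=(\lambda^\ast)^2\,\frac12\|\nabla{\bf u}\|_2^2\le \frac12\|\nabla{\bf u}\|_2^2$. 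Applying this to $\Phi_0$: were $G(\Phi_0)\le 0$, we would get $\frac12\|\nabla\Phi_0\|_2^2\ge d_I$, contradicting the hypothesis $\frac12\|\nabla\Phi_0\|_2^2\le \frac12\|\nabla\Phi_0\|_2^2+(M(\Phi_0))^\sigma<d_I$; hence $G(\Phi_0)>0$. I expect this scaling/infimum comparison to be the crux, since it is exactly the point at which the characterization of $d_I$ is used.

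It then remains to check the energy--threshold bound. By the definition of the energy, $E(\Phi_0)=\frac12\|\nabla\Phi_0\|_2^2-\frac{1}{p+1}\mathcal{P}(\Phi_0)\le \frac12\|\nabla\Phi_0\|_2^2$, because $\mathcal{P}(\Phi_0)\ge 0$ in the focusing regime (the self--interaction terms carry $\mu_j>0$ and dominate; when $\beta_{ij}\ge 0$ this is automatic). Adding $(M(\Phi_0))^\sigma$ and invoking the hypothesis yields $(M(\Phi_0))^\sigma+E(\Phi_0)\le (M(\Phi_0))^\sigma+\frac12\|\nabla\Phi_0\|_2^2<d_I$, which is precisely the threshold inequality needed. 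With both $G(\Phi_0)>0$ and $(M(\Phi_0))^\sigma+E(\Phi_0)<d_I$ in hand, part (A) of Theorem \ref{Ithm} applies and the solution of (\ref{system}) exists globally. The only delicate point beyond the scaling argument is the nonnegativity of $\mathcal{P}(\Phi_0)$ used in the first inequality; this is exactly what legitimizes the corollary's replacement of $E(\Phi_0)$ by the larger quantity $\frac12\|\nabla\Phi_0\|_2^2$, making its hypothesis a genuinely stronger and more readily verifiable sufficient condition than that of Theorem \ref{Ithm}.
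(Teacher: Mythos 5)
Your proof is correct and follows essentially the same route as the paper's: reduce to part (A) of Theorem \ref{Ithm} by noting $E(\Phi_0)\le\frac12\|\nabla\Phi_0\|_2^2$, and rule out $G(\Phi_0)\le 0$ by rescaling to a competitor with $G=0$ and invoking the minimality of $d_I$ (the paper writes this rescaling as ``$\lambda\Phi_0$'', but it is exactly your $L^2$-preserving dilation $\lambda^{n/2}{\bf u}(\lambda x)$, as the scaling of $\mathcal{P}$ in the proof of Theorem \ref{Ithm} shows). Your explicit flagging of the sign condition $\mathcal{P}(\Phi_0)\ge 0$ is a point the paper passes over with ``it's obvious,'' so no complaint there.
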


\begin{Cor} (Small Data Criterion II)\label{IIcor}
Assume that $1+4/n<p<1+4/(n-2)^+$. Then if the initial data
$\Phi_0\in (H^1(\mathbb{R}^n))^N$ satisfies
$$
\frac{1}{2}\|\nabla\Phi_0\|^2_2+(M(\Phi_0))^\gamma<d_{II},
$$
the solution of (\ref{system}) exists globally.
\end{Cor}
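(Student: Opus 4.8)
The plan is to deduce this corollary directly from Theorem \ref{IIthm}(A). Since that theorem produces a global solution as soon as the data satisfy both $Q(\Phi_0)>0$ and $(M(\Phi_0))^\gamma+E(\Phi_0)<d_{II}$, it suffices to show that the single hypothesis $\tfrac12\|\nabla\Phi_0\|_2^2+(M(\Phi_0))^\gamma<d_{II}$ forces these two conditions. The link between the hypothesis and the energy condition is the elementary comparison $E(\Phi_0)\le\tfrac12\|\nabla\Phi_0\|_2^2$, which holds whenever $\mathcal{P}(\Phi_0)\ge0$; granting it, one reads off $(M(\Phi_0))^\gamma+E(\Phi_0)\le(M(\Phi_0))^\gamma+\tfrac12\|\nabla\Phi_0\|_2^2<d_{II}$ at once.

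First I would establish $Q(\Phi_0)>0$ by contradiction, using the mass-preserving rescaling $\Phi_0^\lambda(x):=\lambda^{n/2}\Phi_0(\lambda x)$, under which $M(\Phi_0^\lambda)=M(\Phi_0)$, $\|\nabla\Phi_0^\lambda\|_2^2=\lambda^2\|\nabla\Phi_0\|_2^2$ and $\mathcal{P}(\Phi_0^\lambda)=\lambda^{n(p-1)/2}\mathcal{P}(\Phi_0)$, so that $Q(\Phi_0^\lambda)=\tfrac{\lambda^2}{2}\|\nabla\Phi_0\|_2^2-\tfrac{n(p-1)}{4(p+1)}\lambda^{n(p-1)/2}\mathcal{P}(\Phi_0)$. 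If $Q(\Phi_0)\le0$, then $\mathcal{P}(\Phi_0)\ge\tfrac{2(p+1)}{n(p-1)}\|\nabla\Phi_0\|_2^2>0$; since $p>1+4/n$ gives the exponent $n(p-1)/2>2$, the function $\lambda\mapsto Q(\Phi_0^\lambda)$ is positive for small $\lambda$ and nonpositive at $\lambda=1$, so there is $\lambda^*\in(0,1]$ with $Q(\Phi_0^{\lambda^*})=0$. Then $\Phi_0^{\lambda^*}$ is admissible for the variational problem defining $d_{II}$, while at $\lambda^*$ one has $\mathcal{P}(\Phi_0^{\lambda^*})\ge0$, hence $E(\Phi_0^{\lambda^*})\le\tfrac{\lambda^{*2}}{2}\|\nabla\Phi_0\|_2^2$. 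This produces the contradiction
\[ d_{II}\le (M(\Phi_0^{\lambda^*}))^\gamma+E(\Phi_0^{\lambda^*})\le (M(\Phi_0))^\gamma+\tfrac{\lambda^{*2}}{2}\|\nabla\Phi_0\|_2^2\le (M(\Phi_0))^\gamma+\tfrac12\|\nabla\Phi_0\|_2^2<d_{II}, \]
and therefore $Q(\Phi_0)>0$. Combining this with $E(\Phi_0)\le\tfrac12\|\nabla\Phi_0\|_2^2$ verifies both hypotheses of Theorem \ref{IIthm}(A), which yields global existence.

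The step I expect to be the main obstacle is precisely the comparison $E(\Phi_0)\le\tfrac12\|\nabla\Phi_0\|_2^2$, i.e. the sign condition $\mathcal{P}(\Phi_0)\ge0$. In the focusing regime this is automatic (for instance when all $\beta_{ij}\ge0$, so that $\mathcal{P}$ is manifestly nonnegative), and the argument above is complete. When $\mathcal{P}(\Phi_0)<0$ one gets $Q(\Phi_0)>0$ for free, but $(M(\Phi_0))^\gamma+E(\Phi_0)$ may fail to lie below $d_{II}$, so Theorem \ref{IIthm}(A) cannot be invoked verbatim. To cover that regime one would instead argue that the open set $\{\,u:Q(u)>0\,\}$ is preserved along the flow: using the identity $E=\bigl(\tfrac12-\tfrac{2}{n(p-1)}\bigr)\|\nabla\Phi\|_2^2+\tfrac{4}{n(p-1)}Q$ together with conservation of $E$ and $M$, the gradient norm $\|\nabla\Phi(t)\|_2^2$ stays bounded as long as $Q(\Phi(t))>0$, and one rules out $Q$ vanishing in finite time by the same minimality of $d_{II}$ used in Step two. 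Verifying this invariance, rather than the routine energy comparison, is the genuinely delicate point.
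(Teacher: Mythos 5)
Your main argument is correct and is essentially the paper's own proof: reduce to Theorem \ref{IIthm}(A), observe that the hypothesis dominates $(M(\Phi_0))^\gamma+E(\Phi_0)$, and rule out $Q(\Phi_0)\le 0$ by scaling down to a zero of $Q$ and contradicting the minimality of $d_{II}$ --- the paper uses the scalar dilation $\lambda\Phi_0$ with $\lambda\in(0,1)$ (so that $\lambda^\gamma\le 1$ absorbs the mass term) where you use the $L^2$-preserving dilation, a purely cosmetic difference. Your concern about the sign of $\mathcal{P}(\Phi_0)$ is legitimate: the paper's ``it's obvious'' step tacitly assumes $E(\Phi_0)\le\tfrac12\|\nabla\Phi_0\|_2^2$, i.e.\ $\mathcal{P}(\Phi_0)\ge 0$. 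However, your sketched repair for the regime $\mathcal{P}(\Phi_0)<0$ does not actually close that gap: the invariance of $\{Q>0\}$ is itself deduced (via Proposition \ref{inv}) from the conserved bound $(M(\Phi_0))^\gamma+E(\Phi_0)<d_{II}$, which is precisely the inequality that may fail there, so in that regime neither your argument nor the paper's gives a proof.
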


\begin{Rk}
Notice that the first type of thresholds deals with $p\geq1+4/n$
while the second type only deals with $p>1+4/n$.
\end{Rk}

Both for the physical and mathematical reasons, in the last five
years, many authors paid much attention to the Cauchy problem of the
Schr\"{o}dinger equation posed on an arbitrary Riemannian manifold
$(\mathbb{M},g)$  with $\Delta_{g}$ being the associated
Laplace-Beltrami operator (where $\Delta_{g}u=u^{''}$ on the real
line $\mathbb{R}$). See the recent papers
\cite{V.B1,V.B2,N.B1,N.B2,N.B3,N.B4} and the references therein. In
the setting of $(H^1(\mathbb{M}))^N$, the conservation laws of mass
(\ref{M}) and energy (\ref{E}) hold true for (\ref{system}) on
$(\mathbb{M},g)$ with $\int_{\mathbb{R}^n}$ replaced by
$\int_{\mathbb{M}}$ (the volume integration on $\mathbb{M}$). The
virial identities (\ref{J'}) and (\ref{J"}) are also valid with
$\rho$ being a $\mathcal{C}^4$ function on $\mathbb{M}$ and
$\nabla_{\mathbb{M}}$ being the associated gradient operator
(\cite{L}).

For (\ref{system}) on $\mathbb{H}^n$ and on $\mathbb{S}^2$, a
similar local existence result as Proposition 1 still holds when we
replace $(H^1(\mathbb{R}^n))^N$ in Proposition 1 by
$(H^1(\mathbb{H}^n))^N$ and $(H^1(\mathbb{S}^2))^N$ respectively.
The readers can consult \cite{V.B2,N.B1,N.B2} for more related
discussions about the single Schr\"{o}dinger equation on
$\mathbb{H}^n$ and $\mathbb{S}^n$. The reason why we restrict
ourselves on $\mathbb{S}^2$ instead of $\mathbb{S}^n$ is that when
$n\geq 3$ the global wellposedness and the blow-up phenomena seem
more delicate than the case $n=1,2$. For $n\geq 3$, some negative
results of wellposedness on $\mathbb{S}^n$ attributed to N. Burq, P.
G$\acute{e}$rard, and N. Tzvetkov, which are in strong contrast with
the case $\mathbb{R}^n$, $\mathbb{H}^n$ and $\mathbb{S}^2$, can be
found in \cite{N.B1,N.B2,N.B4} (see also \cite{V.B1}). Our results
for the Schr\"{o}dinger system (\ref{system}) on $\mathbb{H}^n$ read
as follows. We emphasize that on $\mathbb{H}^n$ we have to make a
difference dealing with the radial case and the nonradial case, due
to the nonvanishing curvature of the manifold.

\begin{Thm}\label{H^nIthm}(Sharp Threshold I on
$\mathbb{H}^n$: Radial Case) Assume that $1+4/n\leq p<1+4/(n-2)^+$.
The constrained variational problem
$$
d_{\mathbb{H}^nI}:=\inf_{\{{\bf u}\in
(H^1(\mathbb{H}^n))^N\setminus\{0\};\ G({\bf u})=0\}}
\frac{1}{2}\|\nabla_{\mathbb{H}^n}{\bf u}\|_2^2
$$
with
\begin{align*}
G({\bf u})=(M({\bf
u}))^{p+1-n(p-1)/2}-\frac{1}{p+1}\mathcal{P}(\Phi)
\end{align*}
satisfies $d_{\mathbb{H}^nI}>0$. Besides, assume the initial data
$\Phi_0\in (H^1(\mathbb{H}^n))^N$ is radial and satisfies
$$
(M(\Phi_0))^{p+1-n(p-1)/2}+E(\Phi_0)<d_{\mathbb{H}^nI}.
$$
Then we have:\\
(A). If $G(\Phi_0)>0$, the solution exists globally;\\
(B). If $G(\Phi_0)<0$, $|x|\Phi_0(x)\in (L^2(\mathbb{H}^n))^N$, and
$$
\Im\sum_{j=1}^N\int_{\mathbb{H}^n}(\nabla_{\mathbb{H}^n}\phi_{0j}\cdot
\nabla_{\mathbb{H}^n} \rho)\bar{\phi}_{0j}\leq 0$$ when $p>1+4/n$,
the solution blows up in finite time.

Here $\rho=r^2$, where $r=r(x)$ is the geodesic distance from
$x\in\mathbb{H}^n$ to the origin $O\in\mathbb{H}^n$.
\end{Thm}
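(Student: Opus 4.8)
The plan is to run the same argument as in Theorem \ref{Ithm}, now on $\mathbb{H}^n$: once the Gagliardo--Nirenberg inequality on $\mathbb{H}^n$ is granted, every step except the virial computation depends only on the conservation laws and the variational definition of the threshold, and so goes through unchanged; the radial hypothesis is spent exactly once, to control the curvature in the virial identity (\ref{J"}).

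For the positivity of $d_{\mathbb{H}^nI}$, the invariance, and part (A) I argue verbatim as on $\mathbb{R}^n$. The Gagliardo--Nirenberg inequality on $\mathbb{H}^n$, applied to each $\int_{\mathbb{H}^n}|u_j|^{p+1}$ and, through H\"older, to each cross term, gives $\mathcal{P}({\bf u})\le C\|\nabla_{\mathbb{H}^n}{\bf u}\|_2^{n(p-1)/2}(M({\bf u}))^{p+1-n(p-1)/2}$; on the constraint $G({\bf u})=0$ this cancels the mass factor (strictly positive, since $p+1-n(p-1)/2>0$ throughout the stated range) and bounds $\|\nabla_{\mathbb{H}^n}{\bf u}\|_2$ from below, so $d_{\mathbb{H}^nI}>0$. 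Since $\beta_{ij}=\beta_{ji}$, the quantity $(M(\Phi))^{p+1-n(p-1)/2}+E(\Phi)$ is conserved and remains $<d_{\mathbb{H}^nI}$; if $G(\Phi(t_0))=0$ at some $t_0$ (with $\Phi(t_0)\neq0$ by mass conservation), then at $t_0$ this quantity equals $\frac12\|\nabla_{\mathbb{H}^n}\Phi(t_0)\|_2^2\ge d_{\mathbb{H}^nI}$, a contradiction, so the sets $\{G>0\}$ and $\{G<0\}$ are invariant. On $\{G>0\}$ one has $\frac12\|\nabla_{\mathbb{H}^n}\Phi\|_2^2=E(\Phi_0)+\frac1{p+1}\mathcal{P}(\Phi)<E(\Phi_0)+(M(\Phi_0))^{p+1-n(p-1)/2}<d_{\mathbb{H}^nI}$ uniformly, and Proposition \ref{localprop} yields global existence, proving (A).

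For part (B) I first claim $Q(\Phi)<0$ on the invariant blow-up set. This follows from an amplitude--scaling argument that is insensitive to the geometry, as it manipulates $M$, $E$, $\mathcal{P}$ and $\|\nabla_{\mathbb{H}^n}\cdot\|_2$ only through $\Phi\mapsto c\Phi$: since $G(\Phi)<0$ there is a unique $c_0\in(0,1)$ with $G(c_0\Phi)=0$, and the definition of $d_{\mathbb{H}^nI}$ together with a direct computation at $G(c_0\Phi)=0$ gives $(M(c_0\Phi))^{p+1-n(p-1)/2}+E(c_0\Phi)=\frac{c_0^2}{2}\|\nabla_{\mathbb{H}^n}\Phi\|_2^2\ge d_{\mathbb{H}^nI}$; comparing with the conserved quantity $<d_{\mathbb{H}^nI}$ and using the elementary inequality $2(1-t^m)\le m(1-t^2)$ for $t\in(0,1)$, $m=n(p-1)/2\ge2$, yields $Q(\Phi)<0$. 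The genuinely new work is the virial identity (\ref{J"}) with $\rho=r^2$, and here the radial hypothesis is decisive: for radial $\phi_j$ the gradient $\nabla_{\mathbb{H}^n}\phi_j$ is purely radial, so only $D^2\rho(\partial_r,\partial_r)=\rho''=2$ contributes and the Hessian term collapses to its Euclidean value $8\|\nabla_{\mathbb{H}^n}\Phi\|_2^2$; curvature then survives only through $\Delta\rho=2+2(n-1)r\coth r$ and $\Delta^2\rho$. I would record two sign facts: $\Delta\rho\ge2n$, because $r\coth r\ge1$, and $\Delta^2\rho\ge0$, because differentiating twice writes it as a nonnegative combination of the nonnegative quantities $r\coth r-1$ and $\cosh r\sinh r-r=\frac12\sinh 2r-r$. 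Writing $\Delta\rho=2n+(\Delta\rho-2n)$ splits the nonlinear part of $J''$ into its Euclidean value $-\frac{4n(p-1)}{p+1}\mathcal{P}(\Phi)$ plus a remainder weighted by $(\Delta\rho-2n)\ge0$ against the nonlinear density; together with $-\int_{\mathbb{H}^n}(\Delta^2\rho)\sum_j|\phi_j|^2\le0$ this gives $J''(t)=16Q(\Phi)-R(t)$, where $R(t)$ gathers the two curvature corrections. The main obstacle is the sign of $R(t)$: on $\mathbb{R}^n$ one has $\Delta\rho-2n\equiv0$ and $\Delta^2\rho\equiv0$, so $R\equiv0$ and $J''=16Q$, whereas on $\mathbb{H}^n$ the factor $\Delta\rho-2n$ is positive and unbounded as $r\to\infty$, so $R$ has no a priori sign. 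When the nonlinear density is pointwise nonnegative (in particular for nonnegative couplings $\beta_{ij}$, the focusing regime of interest) both corrections are favorable, $R(t)\ge0$, and $J''(t)\le16Q(\Phi)<0$; the indefinite case must be handled by absorbing $R$ into the strictly negative $16Q$, the delicate point alluded to in the introduction.

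Granting $J''<0$, blow-up follows by concavity of the nonnegative function $J$. For $p>1+4/n$ the stated hypothesis $\Im\sum_j\int_{\mathbb{H}^n}(\nabla_{\mathbb{H}^n}\phi_{0j}\cdot\nabla_{\mathbb{H}^n}\rho)\bar{\phi}_{0j}\le0$ is exactly $J'(0)\le0$ by (\ref{J'}); then for any $t_1>0$ we have $J'(t_1)<0$ and $J'(t)\le J'(t_1)$ for $t\ge t_1$, so $J(t)\le J(t_1)+J'(t_1)(t-t_1)\to-\infty$, which is impossible for $J\ge0$; hence by Proposition \ref{localprop} the solution blows up, $\|\nabla_{\mathbb{H}^n}\Phi\|_2\to\infty$, in finite time. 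For $p=1+4/n$ one has $Q=E$, so $Q$ is conserved and $J''\le16E(\Phi_0)<0$ is a negative constant, whence the resulting quadratic upper bound on $J$ forces finite--time blow-up with no condition on $J'(0)$, as stated.
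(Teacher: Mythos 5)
Your proposal follows the paper's own route: the Gagliardo--Nirenberg inequality on $\mathbb{H}^n$ for $d_{\mathbb{H}^nI}>0$, the invariant sets of Proposition \ref{inv} with $f(M,E)=M^{p+1-n(p-1)/2}+E$, the amplitude-scaling argument from the proof of Theorem \ref{Ithm} to get $Q(\Phi)<0$ (including the same elementary inequality $2(1-\lambda^m)\le m(1-\lambda^2)$), and the estimates $D^2\rho(\nabla_{\mathbb{H}^n}\phi_j,\nabla_{\mathbb{H}^n}\bar{\phi_j})\le 2|\nabla_{\mathbb{H}^n}\phi_j|^2$ (an equality for radial $\phi_j$), $\Delta^2_{\mathbb{H}^n}\rho>0$ and $\Delta_{\mathbb{H}^n}\rho\ge 2n$ to conclude $J''\le 16Q(\Phi)$. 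The one place you go beyond the paper is the observation that passing from $-\frac{2(p-1)}{p+1}\int(\Delta_{\mathbb{H}^n}\rho)(\cdot)$ to $-\frac{4n(p-1)}{p+1}\mathcal{P}(\Phi)$ requires the nonlinear density to be pointwise nonnegative, which can fail when some $\beta_{ij}<0$; the paper asserts $J''\le 16Q$ without comment, so the ``indefinite case'' you leave open is an implicit assumption (in effect $\beta_{ij}\ge 0$) in the paper's own proof rather than a defect of your argument relative to it.
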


Theorem \ref{H^nIthm} doesn't work for the nonradial case. However,
the second type of thresholds on $\mathbb{H}^n$ (see Theorem
\ref{H^nIIthm}) holds for the nonradial case fortunately. To state
it, we need the following definition
\begin{align*}
Q^*(\Phi):=
\frac{1}{2}\|\nabla_{\mathbb{H}^n}\Phi\|_2^2-\frac{(n-1)(p-1)}{4(p+1)}\mathcal{P}(\Phi).
\end{align*}

\begin{Thm} (Sharp Threshold II on $\mathbb{H}^n$)\label{H^nIIthm}

{\bf Radial Case:}

Assume that $1+4/n<p<1+4/(n-2)^+$. The constrained variational
problem
$$
d_{\mathbb{H}^nII}:=\inf_{\{{\bf u}\in
(H^1(\mathbb{H}^n))^N\setminus\{0\};\ Q({\bf u})=0\}} (M({\bf
u}))^\gamma+E({\bf u})
$$
with $\gamma>0$ being an arbitrary constant satisfies
$d_{\mathbb{H}^nII}>0$. Besides, assume the initial data $\Phi_0\in
(H^1(\mathbb{H}^n))^N$ is radial and satisfies
$$
(M(\Phi_0))^\gamma+E(\Phi_0)<d_{\mathbb{H}^nII},
$$
then we have\\
(A). If $Q(\Phi_0)>0$, the solution exists globally;\\
(B). If $Q(\Phi_0)<0$ and $|x|\Phi_0(x)\in (L^2(\mathbb{H}^n))^N$,
the solution blows up in finite time.

{\bf Nonradial Case:}

Assume $n\geq 2$ and $1+4/(n-1)<p<1+4/(n-2)^+$. The constrained
variational problem
$$
d_{\mathbb{H}^nII}^*:=\inf_{\{{\bf u}\in
(H^1(\mathbb{H}^n))^N\setminus\{0\};\ Q^*({\bf u})=0\}} (M({\bf
u}))^\gamma+E({\bf u})
$$
with $\gamma>0$ being an arbitrary constant satisfies
$d_{\mathbb{H}^nII}^*>0$. Besides, assume the initial data
$\Phi_0\in (H^1(\mathbb{H}^n))^N$ satisfies
$$
(M(\Phi_0))^\gamma+E(\Phi_0)<d_{\mathbb{H}^nII}^*,
$$
then we have\\
(A). If $Q(\Phi_0)>0$, the solution exists globally;\\
(B). If $Q(\Phi_0)<0$ and $|x|\Phi_0(x)\in (L^2(\mathbb{H}^n))^N$,
the solution blows up in finite time.

\end{Thm}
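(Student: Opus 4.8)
The plan is to run the potential-well argument in parallel for the two cases, letting $R$ denote $Q$ in the radial case and $Q^{*}$ in the nonradial case, $d$ the corresponding threshold, and $\kappa:=n(p-1)/4$ (radial) or $\kappa:=(n-1)(p-1)/4$ (nonradial), so that the hypothesis $p>1+4/n$ (resp. $p>1+4/(n-1)$) is exactly $\kappa>1$. Substituting $\mathcal P(\mathbf u)=(p+1)(\tfrac12\|\nabla\mathbf u\|_2^2-E(\mathbf u))$ into $R$ yields the linear identity $R(\mathbf u)=\kappa E(\mathbf u)-\tfrac{\kappa-1}{2}\|\nabla\mathbf u\|_2^2$; on the constraint $\{R=0\}$ this gives $E(\mathbf u)=\tfrac{\kappa-1}{2\kappa}\|\nabla\mathbf u\|_2^2>0$, so the objective reduces to $(M(\mathbf u))^{\gamma}+\tfrac{\kappa-1}{2\kappa}\|\nabla\mathbf u\|_2^2$. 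To see $d>0$ I would combine the Gagliardo--Nirenberg inequality on $\mathbb H^{n}$, which bounds $\mathcal P(\mathbf u)$ by $C\|\nabla\mathbf u\|_2^{n(p-1)/2}(M(\mathbf u))^{p+1-n(p-1)/2}$ (the off-diagonal terms of $\mathcal P$ being absorbed by Young's inequality), with the constraint relation $\tfrac12\|\nabla\mathbf u\|_2^2=\mathrm{const}\cdot\mathcal P(\mathbf u)$: together they prevent $\|\nabla\mathbf u\|_2$ and $M(\mathbf u)$ from being simultaneously small, and an elementary minimization of $(M)^{\gamma}+\tfrac{\kappa-1}{2\kappa}\|\nabla\|_2^2$ under that lower bound produces a positive infimum.

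Since $M$ and $E$ are conserved, $I(\Phi(t)):=(M(\Phi(t)))^{\gamma}+E(\Phi(t))$ is constant, equal to $I_0:=(M(\Phi_0))^{\gamma}+E(\Phi_0)<d$. If $R(\Phi(t))$ changed sign there would be a first $t_0$ with $R(\Phi(t_0))=0$ and $\Phi(t_0)\neq0$ (mass is conserved and positive), whence $I_0=I(\Phi(t_0))\ge d$, a contradiction; thus the sign of $R(\Phi(t))$ is preserved, which is the invariant-set statement. In case (A), $R(\Phi(t))>0$ forces $\|\nabla\Phi(t)\|_2^2<\tfrac{2\kappa}{\kappa-1}E(\Phi_0)$ for all $t$; this uniform $H^{1}$ bound and the blow-up alternative of Proposition \ref{localprop} give global existence.

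For the blow-up alternative (B) I need a uniform negative gap for $R$ and the virial estimate. The gap is obtained \emph{without} any dilation symmetry (which $\mathbb H^{n}$ lacks) by the scalar value-scaling $\mathbf u\mapsto\theta\mathbf u$: writing $\tilde I(\mathbf u):=(M(\mathbf u))^{\gamma}+\tfrac{\kappa-1}{2\kappa}\|\nabla\mathbf u\|_2^2=I(\mathbf u)-\tfrac1\kappa R(\mathbf u)$, one checks that when $R(\mathbf u)<0$ there is $\theta^{*}\in(0,1)$ with $R(\theta^{*}\mathbf u)=0$ and $\tilde I(\theta^{*}\mathbf u)<\tilde I(\mathbf u)$, so that $d\le I(\theta^{*}\mathbf u)=\tilde I(\theta^{*}\mathbf u)<\tilde I(\mathbf u)$; since $R=\kappa(I-\tilde I)$ and $I(\Phi(t))=I_0<d$, this yields $R(\Phi(t))\le\kappa(I_0-d)<0$ uniformly in $t$. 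For the virial estimate I set $J(t)=\sum_j\int_{\mathbb H^n}\rho\,|\phi_j|^2$ (finite and propagated since $|x|\Phi_0\in(L^2)^N$) and must show from (\ref{J"}) that $J''(t)\le16\,R(\Phi(t))$. In the \emph{radial} case I take $\rho=r^2$; since radial data stay radial under the flow, radiality annihilates the angular part of $D^2\rho$, so $4\sum_j\int D^2\rho(\nabla\phi_j,\nabla\bar\phi_j)=8\|\nabla\Phi\|_2^2$, while $\Delta\rho=2+2(n-1)r\coth r\ge2n$ supplies the full coefficient $-\tfrac{4n(p-1)}{p+1}\mathcal P$ and $\Delta^2\rho=2(n-1)\Delta(r\coth r)\ge0$ is favorable, giving $J''\le16Q$. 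In the \emph{nonradial} case the angular Hessian of $r^2$ grows like $2r\coth r$ and is uncontrollable, so I instead take $\rho=2\log\cosh r$, for which $D^2\rho\le2g$ (angular eigenvalue $2\tanh r\coth r=2$, radial eigenvalue $2\,\mathrm{sech}^2 r\le2$) gives $4\sum_j\int D^2\rho(\nabla\phi_j,\nabla\bar\phi_j)\le8\|\nabla\Phi\|_2^2$, whereas now $\Delta\rho=2\,\mathrm{sech}^2 r+2(n-1)\ge2(n-1)$ supplies only $-\tfrac{4(n-1)(p-1)}{p+1}\mathcal P$; this is exactly why $Q^{*}$ and the stronger range $p>1+4/(n-1)$ are forced. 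Combining, $J''(t)\le16R(\Phi(t))\le16\kappa(I_0-d)<0$, so the nonnegative quantity $J$ would vanish in finite time, which is impossible unless the solution blows up.

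The main obstacle is precisely the curved virial inequality $J''\le16R$: unlike on $\mathbb R^n$, identity (\ref{J"}) on $\mathbb H^{n}$ carries a nonconstant $\Delta\rho$, a nonzero bi-Laplacian $\Delta^2\rho$, and an angular Hessian contribution, and it is the control of these curvature terms that both forces the weight $\rho=2\log\cosh r$ in the nonradial case and creates the radial/nonradial dichotomy. A secondary technical point is the indefinite sign of the coupling density $\sum_{i,j}\beta_{ij}|\phi_i|^{(p+1)/2}|\phi_j|^{(p+1)/2}$: because $\Delta\rho$ is a genuine weight rather than a constant, replacing it by its lower bound inside the nonlinear terms requires the $\mathcal P$-density to be pointwise nonnegative, so the possibility of some $\beta_{ij}<0$ needs a separate estimate; likewise the favorable signs of $\Delta^2(r^2)$ and $\Delta^2(2\log\cosh r)$ must be confirmed by explicit computation of $\Delta(r\coth r)$ and $\Delta(\mathrm{sech}^2 r)$.
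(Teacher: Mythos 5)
Your overall architecture --- the cross-constrained minimization, the invariance of the sign of $Q$ (resp.\ $Q^*$) deduced from conservation of $M$ and $E$, the uniform negative gap $R(\Phi(t))\le\kappa(I_0-d)$ obtained by the value-scaling ${\bf u}\mapsto\theta{\bf u}$, and a virial inequality $J''\le C\,R$ --- is exactly the paper's, and your radial case (weight $\rho=r^2$, with $\Delta_{\mathbb{H}^n}\rho=2+2(n-1)r\coth r\ge 2n$ and $\Delta^2_{\mathbb{H}^n}\rho\ge 0$) reproduces the paper's argument.

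The nonradial case, however, has a genuine gap. You replace the paper's weight by $\rho=2\log\cosh r$ and defer the sign of $\Delta^2_{\mathbb{H}^n}\rho$ to ``explicit computation''; that computation fails. One finds $\Delta_{\mathbb{H}^n}\rho=2n-2\tanh^2 r$ and hence
$$
\Delta^2_{\mathbb{H}^n}\rho=-2\,\Delta_{\mathbb{H}^n}(\tanh^2 r)=-\frac{4\bigl(n-3\tanh^2 r\bigr)}{\cosh^2 r},
$$
which for $n\ge 3$ is strictly negative at every finite $r$ (and changes sign when $n=2$). Consequently the term $-\sum_j\int(\Delta^2\rho)|\phi_j|^2$ in (\ref{J"}) is a \emph{positive} contribution, of size up to $4n\,(M(\Phi_0))^2$, and since the only negative quantity available is $16\kappa(I_0-d_{\mathbb{H}^nII}^*)$, which can be arbitrarily close to $0$, the inequality $J''\le 16Q^*$ is lost and with it conclusion (B). The paper avoids this by taking $\rho(r)=\int_0^r(\int_0^s\sinh^{n-1}\tau\,d\tau)(\sinh^{n-1}s)^{-1}ds$, i.e.\ the radial solution of $\Delta_{\mathbb{H}^n}\rho\equiv 1$: then $\Delta^2\rho\equiv 0$ so the bi-Laplacian term vanishes identically, $D^2\rho\le\frac{1}{n-1}g$ yields $J''\le\frac{8}{n-1}Q^*$, and --- a point you correctly flag as delicate whenever $\Delta\rho$ is a non-constant weight --- no pointwise lower bound on $\Delta\rho$ ever has to be pushed through the possibly sign-indefinite density $\sum_{i,j}\beta_{ij}|\phi_i|^{(p+1)/2}|\phi_j|^{(p+1)/2}$, because $\Delta\rho$ is literally the constant $1$. (That last issue does persist for $\rho=r^2$ in the radial case, in your write-up and in the paper's alike.) So: correct skeleton, correct radial case, but the nonradial virial weight must be changed to the $\Delta\rho\equiv 1$ solution for part (B) to go through.
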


Corresponding to Theorems \ref{H^nIthm}, \ref{H^nIIthm}, we have the
small data criterions below.

\begin{Cor}\label{H^nIcor}(Small Data Criterion I on $\mathbb{H}^n$)
Assume that $1+4/n\leq p<1+4/(n-2)^+$. Then if the initial data
$\Phi_0\in (H^1(\mathbb{H}^n))^N$ no matter radial or not satisfies
$$
\frac{1}{2}\|\nabla_{\mathbb{H}^n}\Phi_0\|^2_2+(M(\Phi_0))^{p+1-n(p-1)/2}<d_{\mathbb{H}^nI},
$$
the solution of (\ref{system}) exists globally.
\end{Cor}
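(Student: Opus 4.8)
The plan is to reduce Corollary \ref{H^nIcor} to the global-existence assertion (A) of Theorem \ref{H^nIthm}. The first thing I would record — and what I expect to be the conceptual crux — is that assertion (A) is genuinely \emph{radiality-free}: its proof uses only the conservation of the mass $M$ and the energy $E$ together with the invariance of the region $\{G>0\}$ under the flow, and none of these ingredients refers to the geodesic distance or to any symmetry of the data. This is exactly what licenses the conclusion ``no matter radial or not,'' even though Theorem \ref{H^nIthm} is phrased for radial data (radiality there is needed only for the blow-up part (B), through the virial identity with $\rho=r^2$). Writing $\sigma:=p+1-n(p-1)/2$, one checks $0<\sigma<p+1$ throughout $1+4/n\le p<1+4/(n-2)^+$. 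It then suffices to show that the hypothesis
\[
\tfrac12\|\nabla_{\mathbb{H}^n}\Phi_0\|_2^2+(M(\Phi_0))^{\sigma}<d_{\mathbb{H}^nI}
\]
forces the two hypotheses of Theorem \ref{H^nIthm}(A), namely $G(\Phi_0)>0$ and $(M(\Phi_0))^{\sigma}+E(\Phi_0)<d_{\mathbb{H}^nI}$. The case $\Phi_0=0$ is trivial, so I assume $\Phi_0\neq0$, and recall $d_{\mathbb{H}^nI}>0$ by Theorem \ref{H^nIthm}.

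The main step is the positivity $G(\Phi_0)>0$, which I would obtain by a scalar scaling argument; its virtue is that multiplication by a constant is insensitive to the manifold, so the argument transfers verbatim from $\mathbb{R}^n$ to $\mathbb{H}^n$. For $\lambda>0$ put $\mathbf u_\lambda=\lambda\Phi_0$; then $M(\mathbf u_\lambda)=\lambda M(\Phi_0)$, $\mathcal P(\mathbf u_\lambda)=\lambda^{p+1}\mathcal P(\Phi_0)$ and $\|\nabla_{\mathbb{H}^n}\mathbf u_\lambda\|_2^2=\lambda^2\|\nabla_{\mathbb{H}^n}\Phi_0\|_2^2$, so that $G(\mathbf u_\lambda)=\lambda^{\sigma}(M(\Phi_0))^{\sigma}-\tfrac1{p+1}\lambda^{p+1}\mathcal P(\Phi_0)$. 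If $G(\Phi_0)=0$, then $\Phi_0$ is itself admissible in the variational problem defining $d_{\mathbb{H}^nI}$, forcing $\tfrac12\|\nabla_{\mathbb{H}^n}\Phi_0\|_2^2\ge d_{\mathbb{H}^nI}$ and contradicting the hypothesis. If $G(\Phi_0)<0$, then $\mathcal P(\Phi_0)>0$, and since $0<\sigma<p+1$ the leading term $\lambda^{\sigma}(M(\Phi_0))^{\sigma}$ makes $G(\mathbf u_\lambda)>0$ for all small $\lambda$; the intermediate value theorem then produces $\lambda_0\in(0,1)$ with $G(\mathbf u_{\lambda_0})=0$. Admissibility of $\mathbf u_{\lambda_0}$ gives $\tfrac12\|\nabla_{\mathbb{H}^n}\Phi_0\|_2^2\ge\lambda_0^{-2}d_{\mathbb{H}^nI}>d_{\mathbb{H}^nI}$, again a contradiction. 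Hence $G(\Phi_0)>0$.

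For the energy threshold, $G(\Phi_0)>0$ bounds $\mathcal P(\Phi_0)$ above by $(p+1)(M(\Phi_0))^{\sigma}$, and in the focusing regime $\mathcal P(\Phi_0)\ge0$ (automatic when the $\beta_{ij}$ are nonnegative) one has $E(\Phi_0)\le\tfrac12\|\nabla_{\mathbb{H}^n}\Phi_0\|_2^2$, whence $(M(\Phi_0))^{\sigma}+E(\Phi_0)\le(M(\Phi_0))^{\sigma}+\tfrac12\|\nabla_{\mathbb{H}^n}\Phi_0\|_2^2<d_{\mathbb{H}^nI}$. With both hypotheses verified I would invoke the radiality-free mechanism of Theorem \ref{H^nIthm}(A): since $M$ and $E$ are conserved, $(M(\Phi))^{\sigma}+E(\Phi)$ stays below $d_{\mathbb{H}^nI}$; on $\{G=0\}\setminus\{0\}$ this same quantity equals $\tfrac12\|\nabla_{\mathbb{H}^n}\cdot\|_2^2\ge d_{\mathbb{H}^nI}$, so $G(\Phi(t))$ can never vanish and $\{G>0\}$ is invariant. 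Feeding $G(\Phi(t))>0$ back into the energy identity yields $\tfrac12\|\nabla_{\mathbb{H}^n}\Phi(t)\|_2^2=E(\Phi_0)+\tfrac1{p+1}\mathcal P(\Phi(t))<E(\Phi_0)+(M(\Phi_0))^{\sigma}<d_{\mathbb{H}^nI}$, a uniform gradient bound that excludes finite-time blow-up by Proposition \ref{localprop}. The delicate points are thus the radiality-free reading of (A) and the scaling–continuity step yielding $\lambda_0$; the only genuine restriction is the sign of $\mathcal P(\Phi_0)$ in the energy inequality, which is harmless in the focusing case.
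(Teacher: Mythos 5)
Your proposal is correct and follows essentially the same route as the paper: reduce to the global-existence part (A) of Theorem \ref{H^nIthm}, deduce $(M(\Phi_0))^{p+1-n(p-1)/2}+E(\Phi_0)<d_{\mathbb{H}^nI}$ from the hypothesis, and rule out $G(\Phi_0)\le 0$ by the scaling/intermediate-value argument against the minimality of $d_{\mathbb{H}^nI}$. Your two explicit asides --- that part (A) is radiality-free (which is what justifies ``no matter radial or not'') and that the step $E(\Phi_0)\le\tfrac12\|\nabla_{\mathbb{H}^n}\Phi_0\|_2^2$ uses $\mathcal{P}(\Phi_0)\ge 0$ --- are points the paper leaves tacit, but they do not change the argument.
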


\begin{Cor} (Small Data Criterion II on $\mathbb{H}^n$)\label{H^nIIcor}
Assume that $1+4/n<p<1+4/(n-2)^+$. Then if the initial data
$\Phi_0\in (H^1(\mathbb{H}^n))^N$ no matter radial or not satisfies
$$
\frac{1}{2}\|\nabla_{\mathbb{H}^n}\Phi_0\|^2_2+(M(\Phi_0))^\gamma<d_{\mathbb{H}^nII},
$$
the solution of (\ref{system}) exists globally.
\end{Cor}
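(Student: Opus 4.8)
The plan is to read the corollary off the sharp threshold result (Theorem \ref{H^nIIthm}) rather than redo the blow-up/global dichotomy: I will show that the smallness hypothesis $\frac12\|\nabla_{\mathbb{H}^n}\Phi_0\|_2^2+(M(\Phi_0))^\gamma<d_{\mathbb{H}^nII}$ already forces the two hypotheses of part (A) of that theorem, namely $Q(\Phi_0)>0$ together with $(M(\Phi_0))^\gamma+E(\Phi_0)<d_{\mathbb{H}^nII}$, after which global existence is immediate. For radial $\Phi_0$ I invoke the radial part of Theorem \ref{H^nIIthm} (built on $Q$ and $d_{\mathbb{H}^nII}$); for general data the same scheme applies with $Q^*$ and $d_{\mathbb{H}^nII}^*$ replacing $Q$ and $d_{\mathbb{H}^nII}$, the only change being the coefficient $\frac{(n-1)(p-1)}{4(p+1)}$, which still lies in the admissible range once $p>1+4/(n-1)$. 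I therefore describe the argument only for the radial functional $Q$.

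The energy--mass inequality is the easy half. Every integral entering $\mathcal P$ is nonnegative, so in the regime $\mathcal P(\Phi_0)\ge0$ one has $E(\Phi_0)=\frac12\|\nabla_{\mathbb{H}^n}\Phi_0\|_2^2-\frac1{p+1}\mathcal P(\Phi_0)\le\frac12\|\nabla_{\mathbb{H}^n}\Phi_0\|_2^2$, whence $(M(\Phi_0))^\gamma+E(\Phi_0)\le(M(\Phi_0))^\gamma+\frac12\|\nabla_{\mathbb{H}^n}\Phi_0\|_2^2<d_{\mathbb{H}^nII}$ by assumption.

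The core is the sign condition $Q(\Phi_0)>0$, which I establish by a scaling and contradiction argument of Payne--Sattinger type. Suppose $Q(\Phi_0)\le0$; since $\frac12\|\nabla_{\mathbb{H}^n}\Phi_0\|_2^2>0$ this forces $\mathcal P(\Phi_0)>0$. Using the pointwise homogeneities $\|\nabla_{\mathbb{H}^n}(\lambda\Phi_0)\|_2^2=\lambda^2\|\nabla_{\mathbb{H}^n}\Phi_0\|_2^2$, $\mathcal P(\lambda\Phi_0)=\lambda^{p+1}\mathcal P(\Phi_0)$ and $M(\lambda\Phi_0)=\lambda M(\Phi_0)$, the map $\lambda\mapsto Q(\lambda\Phi_0)$ is positive for small $\lambda>0$ (the $\lambda^2$ term dominates since $p+1>2$) and is $\le0$ at $\lambda=1$, so there is $\lambda_0\in(0,1]$ with $Q(\lambda_0\Phi_0)=0$. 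On the constraint set the relation $\frac12\|\nabla_{\mathbb{H}^n}\mathbf u\|_2^2=\frac{n(p-1)}{4(p+1)}\mathcal P(\mathbf u)$ gives $E(\mathbf u)=\left(\frac12-\frac2{n(p-1)}\right)\|\nabla_{\mathbb{H}^n}\mathbf u\|_2^2$, and $p>1+4/n$ places the constant in $(0,\frac12)$. Hence $d_{\mathbb{H}^nII}\le(M(\lambda_0\Phi_0))^\gamma+E(\lambda_0\Phi_0)=\lambda_0^\gamma(M(\Phi_0))^\gamma+\lambda_0^2\left(\frac12-\frac2{n(p-1)}\right)\|\nabla_{\mathbb{H}^n}\Phi_0\|_2^2<(M(\Phi_0))^\gamma+\frac12\|\nabla_{\mathbb{H}^n}\Phi_0\|_2^2<d_{\mathbb{H}^nII}$, a contradiction. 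Thus $Q(\Phi_0)>0$, and Theorem \ref{H^nIIthm}(A) delivers the global solution.

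The step I expect to be the genuine obstacle is the energy--mass inequality when the coupling is repulsive, i.e.\ when $\mathcal P(\Phi_0)<0$: then $E(\Phi_0)>\frac12\|\nabla_{\mathbb{H}^n}\Phi_0\|_2^2$, the crude bound $E\le\frac12\|\nabla\|^2$ fails, and $(M(\Phi_0))^\gamma+E(\Phi_0)<d_{\mathbb{H}^nII}$ is no longer automatic. Here I would bound $|\mathcal P(\Phi_0)|$ by the Gagliardo--Nirenberg inequality on $\mathbb{H}^n$ in terms of $\|\nabla_{\mathbb{H}^n}\Phi_0\|_2$ and $M(\Phi_0)$, both controlled by the hypothesis; the point is that the super-quadratic exponent $n(p-1)/2>2$ keeps this correction below the gap $d_{\mathbb{H}^nII}-(M(\Phi_0))^\gamma-\frac12\|\nabla_{\mathbb{H}^n}\Phi_0\|_2^2$. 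By contrast, the scaling argument for $Q(\Phi_0)>0$ is robust and transfers verbatim from the $\mathbb{R}^n$ corollary, since it uses only pointwise homogeneity and is insensitive to the underlying manifold.
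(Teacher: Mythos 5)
Your radial argument is correct and is essentially the paper's own: reduce to part (A) of the threshold theorem, get $(M(\Phi_0))^\gamma+E(\Phi_0)<d_{\mathbb{H}^nII}$ from $E(\Phi_0)\le\frac12\|\nabla_{\mathbb{H}^n}\Phi_0\|_2^2$, and rule out $Q(\Phi_0)\le 0$ by scaling onto the constraint set $\{Q=0\}$ and contradicting the minimality of $d_{\mathbb{H}^nII}$. The genuine gap is in how you treat nonradial data. You propose to rerun the scheme with $Q^*$ and $d^*_{\mathbb{H}^nII}$, but that proves a different statement: it needs the smallness hypothesis relative to $d^*_{\mathbb{H}^nII}$ and the range $p>1+4/(n-1)$, whereas the corollary is stated with $d_{\mathbb{H}^nII}$ and $p>1+4/n$ for \emph{all} data, and no comparison between $d_{\mathbb{H}^nII}$ and $d^*_{\mathbb{H}^nII}$ is available to transfer one to the other. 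The observation you are missing --- and the one the paper makes --- is that radiality enters Theorem \ref{H^nIIthm} only through the blow-up half (B), where the virial weight $\rho=r^2$ and the Hessian estimate $D^2\rho(\nabla_{\mathbb{H}^n}\phi,\nabla_{\mathbb{H}^n}\bar\phi)\le 2|\nabla_{\mathbb{H}^n}\phi|^2$ are needed. The global-existence half is pure invariant-set reasoning: by Proposition \ref{inv}, which uses only the conservation laws and continuity and no symmetry whatsoever, the set $K_+=\{{\bf u}:\ Q({\bf u})>0,\ (M({\bf u}))^\gamma+E({\bf u})<d_{\mathbb{H}^nII}\}$ is invariant for arbitrary $H^1(\mathbb{H}^n)$ data, and on $K_+$ one has $\frac{n(p-1)-4}{2n(p-1)}\|\nabla_{\mathbb{H}^n}\Phi\|_2^2<d_{\mathbb{H}^nII}$, hence a uniform gradient bound and global existence. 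So your radial argument already covers the nonradial case verbatim; the switch to $Q^*$ is neither needed nor wanted here.

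A smaller point: you are right that the ``easy half'' $(M(\Phi_0))^\gamma+E(\Phi_0)\le(M(\Phi_0))^\gamma+\frac12\|\nabla_{\mathbb{H}^n}\Phi_0\|_2^2$ uses $\mathcal P(\Phi_0)\ge 0$, but your proposed repair via Gagliardo--Nirenberg does not close: the correction $|\mathcal P(\Phi_0)|\le C\|\nabla_{\mathbb{H}^n}\Phi_0\|_2^{n(p-1)/2}(M(\Phi_0))^{p+1-n(p-1)/2}$ is only controlled by constants of the order of $d_{\mathbb{H}^nII}$, while the gap $d_{\mathbb{H}^nII}-\frac12\|\nabla_{\mathbb{H}^n}\Phi_0\|_2^2-(M(\Phi_0))^\gamma$ can be arbitrarily small, so there is no reason the correction fits under it. The paper does not confront this case either; the whole framework (every blow-up statement needs $\mathcal P>0$) implicitly takes the couplings attractive, $\beta_{ij}\ge 0$, so that $\mathcal P\ge 0$ pointwise and the easy half is immediate. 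The honest fix is to record that hypothesis, not to try to absorb a negative $\mathcal P$.
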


We are now in position to state the sharp threshold for the
Schr\"{o}dinger system (\ref{system}) posed on $\mathbb{S}^2$. From
the viewpoint of geometry, the compactness of $\mathbb{S}^2$ results
in the difference between the Sobolev embedding on $\mathbb{S}^2$
and the ones on $\mathbb{R}^n$ and $\mathbb{H}^n$. To display the
same spirit as in the analysis on $\mathbb{R}^n$ and $\mathbb{S}^n$,
we prefer to work on the function space
$$
\Lambda:=\{{\bf u}\in (H^1(\mathbb{S}^2))^N\setminus\{0\};\ {\bf u}\
\textrm{is antisymmetric about the equator}\},
$$
and we define
$$
Q^{**}(\Phi):=\frac{1}{2}\|\nabla_{\mathbb{S}^2}\phi\|_2^2
-\frac{p-1}{4(p+1)}\mathcal{P}(\Phi).
$$
Our results are as below.

\begin{Thm} (Sharp Threshold on $\mathbb{S}^2$)\label{S^2thm}
Assume that $5<p<\infty$. Let $\gamma>0$ be an arbitrary constant.
The constrained variational problem
$$
d_{\mathbb{S}^2}:=\inf_{\{{\bf u}\in\Lambda;\ Q^{**}({\bf u})=0\}}
(M({\bf u}))^\gamma+E({\bf u})
$$
 satisfies $d_{\mathbb{S}^2}>0$. Besides, assume the initial data
$\Phi_0\in\Lambda$ satisfies
$$
(M(\Phi_0))^\gamma+E(\Phi_0)<d_{\mathbb{S}^2},
$$
then we have\\
(A). If $Q^{**}(\Phi_0)>0$, the solution exists globally;\\
(B). If $Q^{**}(\Phi_0)<0$, the solution blows up in finite time.
\end{Thm}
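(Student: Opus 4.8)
The plan is to follow the same three-step pattern used for Theorem \ref{IIthm}, adapting each step to the compact geometry of $\mathbb{S}^2$ and to the subspace $\Lambda$. Writing $a=\tfrac12\|\nabla_{\mathbb{S}^2}\mathbf u\|_2^2$ and $\mathcal P=\mathcal P(\mathbf u)$, one has the algebraic relations $E=a-\frac1{p+1}\mathcal P$ and $Q^{**}=a-\frac{p-1}{4(p+1)}\mathcal P$, whence $E-Q^{**}=\frac{p-5}{4(p+1)}\mathcal P$ and $\tfrac12\|\nabla_{\mathbb{S}^2}\mathbf u\|_2^2=\frac{(p-1)E-4Q^{**}}{p-5}$; the hypothesis $p>5$ is exactly what makes these coefficients positive. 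On the constraint $Q^{**}=0$ they give $\mathcal P=\frac{2(p+1)}{p-1}\|\nabla_{\mathbb{S}^2}\mathbf u\|_2^2$ and $E=\frac{p-5}{2(p-1)}\|\nabla_{\mathbb{S}^2}\mathbf u\|_2^2>0$, so $d_{\mathbb{S}^2}\ge 0$ automatically; the content is the strict positivity and the dichotomy (A)/(B).

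\textbf{Positivity and invariance.} First I would prove $d_{\mathbb{S}^2}>0$. The decisive structural fact is that on $\Lambda$ a Poincar\'e inequality holds: an antisymmetric $H^1$ function vanishes on the equator, hence is controlled by the first Dirichlet eigenvalue of a hemisphere, giving $M(\mathbf u)\le C\|\nabla_{\mathbb{S}^2}\mathbf u\|_2$. Combining the Sobolev embedding $H^1(\mathbb{S}^2)\hookrightarrow L^{p+1}$ with this bound yields $\mathcal P(\mathbf u)\le C\|\nabla_{\mathbb{S}^2}\mathbf u\|_2^{p+1}$ on $\Lambda$, and on the constraint set the relation $\mathcal P=\frac{2(p+1)}{p-1}\|\nabla_{\mathbb{S}^2}\mathbf u\|_2^2$ then forces $\|\nabla_{\mathbb{S}^2}\mathbf u\|_2\ge c_0>0$, so $(M)^\gamma+E$ is bounded below by a positive constant; since $H^1(\mathbb{S}^2)\hookrightarrow\hookrightarrow L^{p+1}$ is compact, a minimizing sequence converges and $d_{\mathbb{S}^2}$ is attained. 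The invariance step is soft: by conservation of $M$ and $E$ we have $(M(\Phi(t)))^\gamma+E(\Phi(t))\equiv(M(\Phi_0))^\gamma+E(\Phi_0)<d_{\mathbb{S}^2}$, so $Q^{**}(\Phi(t))$ can never vanish (a zero would exhibit a constrained competitor with value $<d_{\mathbb{S}^2}$), and by continuity the sign of $Q^{**}$ is preserved.

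\textbf{Global existence and the virial.} Part (A) is then immediate: if $Q^{**}(\Phi_0)>0$ then $Q^{**}(\Phi(t))>0$ for all $t$, and the identity above gives $\tfrac12\|\nabla_{\mathbb{S}^2}\Phi(t)\|_2^2<\frac{p-1}{p-5}E(\Phi_0)$, a uniform bound, so the solution is global by the blow-up alternative. For part (B) I would run a virial argument, producing a weight $\rho$ with $\Delta_{\mathbb{S}^2}\rho\equiv1$ and $D^2\rho\le g$, so that the virial identity for $J''(t)$ yields $J''(t)\le 4\|\nabla_{\mathbb{S}^2}\Phi\|_2^2-\frac{2(p-1)}{p+1}\mathcal P(\Phi)=8\,Q^{**}(\Phi(t))$. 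No such $\rho$ exists globally on a closed surface (the integral of $\Delta_{\mathbb{S}^2}\rho$ must vanish), and this is where antisymmetry enters: one takes $\rho$ to be a function of the colatitude solving $\rho''+\cot\theta\,\rho'=1$ on each hemisphere (explicitly $\rho'=\tan(\theta/2)$, so the Hessian eigenvalues $\rho''$ and $\cot\theta\,\rho'$ lie in $[0,1]$) and reflects it evenly across the equator, obtaining a bounded $\rho\ge0$ that is smooth on each hemisphere with a concave corner on the equator. Computing $J''$ hemisphere by hemisphere, every equatorial boundary term involves $\phi_j$ or $|\phi_j|^2$ there, which vanish since $\Phi\in\Lambda$, while the singular corner part of the Hessian term has a favorable sign; hence $J''\le 8Q^{**}$ survives. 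As $\rho$ is bounded on the compact $\mathbb{S}^2$, $J(t)$ is a \emph{bounded} nonnegative function, so a uniform bound $Q^{**}(\Phi(t))\le-\delta<0$ makes $J$ strictly concave with $J''\le-8\delta$, forcing $J$ negative in finite time -- impossible unless the maximal time is finite, i.e. the solution blows up.

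\textbf{Main obstacle.} The crux is precisely this uniform-in-time negativity $Q^{**}(\Phi(t))\le-\delta$. On $\mathbb{R}^n$ it comes for free from the $L^2$-invariant dilation $\mathbf u^\lambda=\lambda^{n/2}\mathbf u(\lambda\,\cdot)$, under which $M$ is preserved (the $(M)^\gamma$ term drops out) and $\frac{d}{d\lambda}E(\mathbf u^\lambda)=\frac2\lambda Q(\mathbf u^\lambda)$, converting the strict gap $d-[(M)^\gamma+E]$ into a quantitative lower bound for $-Q$. On $\mathbb{S}^2$ there is no dilation, so I would replace this by a compactness argument: if the solution were global, the concavity analysis gives $\int_0^\infty|Q^{**}(\Phi(t))|\,dt<\infty$, so $Q^{**}(\Phi(t_k))\to0$ along some $t_k\to\infty$; the conserved quantities pin down $\|\nabla_{\mathbb{S}^2}\Phi(t_k)\|_2^2\to\frac{2(p-1)E(\Phi_0)}{p-5}$, and by the compact embedding $\Phi(t_k)\to V$ strongly in $L^2\cap L^{p+1}$ with $V\in\Lambda$, $V\ne0$, $Q^{**}(V)\le0$ and $(M(V))^\gamma+E(V)\le(M(\Phi_0))^\gamma+E(\Phi_0)<d_{\mathbb{S}^2}$. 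The delicate point is upgrading weak $H^1$ convergence to strong, so as to conclude $Q^{**}(V)=0$ and contradict the definition of $d_{\mathbb{S}^2}$; this is where the compactness of $\mathbb{S}^2$ (absent on $\mathbb{R}^n$ and $\mathbb{H}^n$) does the real work, and ruling out a loss of gradient mass in the limit is the step I expect to require the most care.
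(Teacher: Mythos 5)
Your setup --- the Poincar\'e inequality on $\Lambda$ combined with the Sobolev embedding to get $\mathcal P(\mathbf u)\le C\|\nabla_{\mathbb{S}^2}\mathbf u\|_2^{p+1}$ and hence $d_{\mathbb{S}^2}>0$, the conservation-law argument for the invariance of the sign of $Q^{**}$, the bound $\tfrac12\|\nabla_{\mathbb{S}^2}\Phi\|_2^2<\frac{p-1}{p-5}E(\Phi_0)$ for part (A), and the hemisphere weight $\rho$ with $\Delta_{\mathbb{S}^2}\rho=1$ giving $J''\le 8Q^{**}$ --- is exactly the paper's. The problem is your ``main obstacle'' paragraph, which is where part (B) actually has to close, and there your argument has a genuine gap. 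You correctly observe that weak $H^1$ convergence of $\Phi(t_k)$ only yields $Q^{**}(V)\le 0$ and $(M(V))^\gamma+E(V)<d_{\mathbb{S}^2}$; but that is merely the statement $V\in K_-$, which contradicts nothing (the set $K_-$ is nonempty under hypothesis (B)). To contradict the definition of $d_{\mathbb{S}^2}$ you would need $Q^{**}(V)=0$, i.e.\ no loss of gradient mass in the limit, and you offer no mechanism to rule that loss out. As written, part (B) is not proved.

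The missing idea --- and the reason the theorem transfers painlessly from $\mathbb{R}^n$ to $\mathbb{H}^n$ and $\mathbb{S}^2$ --- is that the uniform negativity of $Q^{**}$ does not come from the $L^2$-invariant dilation at all. In Step 4 of the proof of Theorem \ref{IIthm} the paper uses only the amplitude scaling $\lambda\mapsto\lambda\Phi$, which makes sense on any manifold and preserves $\Lambda$. Since $Q^{**}(\Phi(t))<0$ forces $\mathcal P(\Phi)>0$ and $\|\nabla_{\mathbb{S}^2}\Phi\|_2>0$, there is $\lambda\in(0,1)$ with $Q^{**}(\lambda\Phi)=0$, hence $(M(\lambda\Phi))^\gamma+E(\lambda\Phi)\ge d_{\mathbb{S}^2}$, and the elementary computation
\begin{align*}
(M(\Phi_0))^\gamma+E(\Phi_0)-d_{\mathbb{S}^2}
&\ge\bigl[(M(\Phi))^\gamma+E(\Phi)\bigr]-\bigl[(M(\lambda\Phi))^\gamma+E(\lambda\Phi)\bigr]\\
&\ge\frac{1-\lambda^2}{2}\|\nabla_{\mathbb{S}^2}\Phi\|_2^2-\frac{(p-1)(1-\lambda^{p+1})}{4(p+1)}\mathcal P(\Phi)\\
&=Q^{**}(\Phi)-Q^{**}(\lambda\Phi)=Q^{**}(\Phi)
\end{align*}
(the middle inequality drops $(1-\lambda^\gamma)(M(\Phi))^\gamma\ge0$ and uses $\frac{1}{p+1}\le\frac{p-1}{4(p+1)}$, which is precisely $p\ge5$, together with $\mathcal P(\Phi)>0$) gives the time-uniform bound $Q^{**}(\Phi(t))\le (M(\Phi_0))^\gamma+E(\Phi_0)-d_{\mathbb{S}^2}<0$ directly. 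With this, $J''\le-8\delta$ and $J\ge0$ finish part (B); no compactness, no passage to a time sequence, and no weak-versus-strong convergence issue arise.
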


\begin{Cor} (Small Data Criterion on $\mathbb{S}^2$)\label{S^2cor}
Assume that $5<p<\infty$. Then if the initial data
$\Phi_0\in\Lambda$ satisfies
$$
\frac{1}{2}\|\nabla_{\mathbb{S}^2}\Phi_0\|^2_2+(M(\Phi_0))^\gamma<d_{\mathbb{S}^2},
$$
the solution of (\ref{system}) exists globally.
\end{Cor}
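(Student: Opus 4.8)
The plan is to deduce the corollary from Theorem \ref{S^2thm}(A): I will show that the hypothesis
\[
\frac{1}{2}\|\nabla_{\mathbb{S}^2}\Phi_0\|_2^2 + (M(\Phi_0))^\gamma < d_{\mathbb{S}^2}
\]
forces \emph{both} $Q^{**}(\Phi_0) > 0$ and $(M(\Phi_0))^\gamma + E(\Phi_0) < d_{\mathbb{S}^2}$, which are exactly the hypotheses of part (A), whence the solution is global. Two elementary facts drive the argument. First, on the admissible set $\{u \in \Lambda : Q^{**}(u) = 0\}$ the identity $\frac{1}{2}\|\nabla_{\mathbb{S}^2}u\|_2^2 = \frac{p-1}{4(p+1)}\mathcal{P}(u)$ gives $\mathcal{P}(u) = \frac{2(p+1)}{p-1}\|\nabla_{\mathbb{S}^2}u\|_2^2$ and hence
\[
E(u) = \frac{p-5}{2(p-1)}\|\nabla_{\mathbb{S}^2}u\|_2^2 \ge 0,
\]
the nonnegativity being exactly where the assumption $p > 5$ enters. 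Second, under the amplitude scaling $u \mapsto \lambda u$ (which keeps $u \in \Lambda$) one has $M(\lambda u) = \lambda M(u)$, $\|\nabla_{\mathbb{S}^2}(\lambda u)\|_2^2 = \lambda^2\|\nabla_{\mathbb{S}^2}u\|_2^2$ and $\mathcal{P}(\lambda u) = \lambda^{p+1}\mathcal{P}(u)$.

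First I would settle the sign of $Q^{**}(\Phi_0)$ by a scaling-and-contradiction argument, which is the heart of the proof. Suppose $Q^{**}(\Phi_0) \le 0$; since $\Phi_0 \in \Lambda$ is nonzero and nonconstant we have $\|\nabla_{\mathbb{S}^2}\Phi_0\|_2 > 0$, so this forces $\mathcal{P}(\Phi_0) > 0$. Setting $u_\lambda = \lambda\Phi_0$,
\[
Q^{**}(u_\lambda) = \frac{\lambda^2}{2}\|\nabla_{\mathbb{S}^2}\Phi_0\|_2^2 - \frac{p-1}{4(p+1)}\lambda^{p+1}\mathcal{P}(\Phi_0),
\]
which, since $p+1 > 2$, is positive for small $\lambda > 0$ while being $\le 0$ at $\lambda = 1$; by continuity there is $\lambda_0 \in (0,1]$ with $Q^{**}(u_{\lambda_0}) = 0$, so $u_{\lambda_0}$ is admissible for $d_{\mathbb{S}^2}$. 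Using the constraint value of $E$, the bound $\frac{p-5}{2(p-1)} < \frac{1}{2}$, and $0 < \lambda_0 \le 1$ with $\gamma > 0$,
\begin{align*}
d_{\mathbb{S}^2} &\le (M(u_{\lambda_0}))^\gamma + E(u_{\lambda_0}) = \lambda_0^\gamma (M(\Phi_0))^\gamma + \frac{p-5}{2(p-1)}\lambda_0^2\|\nabla_{\mathbb{S}^2}\Phi_0\|_2^2 \\
&< (M(\Phi_0))^\gamma + \frac{1}{2}\|\nabla_{\mathbb{S}^2}\Phi_0\|_2^2 < d_{\mathbb{S}^2},
\end{align*}
a contradiction. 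Hence $Q^{**}(\Phi_0) > 0$.

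It then remains to verify the sublevel bound $(M(\Phi_0))^\gamma + E(\Phi_0) < d_{\mathbb{S}^2}$. When $\mathcal{P}(\Phi_0) \ge 0$ this is immediate, since $E(\Phi_0) = \frac{1}{2}\|\nabla_{\mathbb{S}^2}\Phi_0\|_2^2 - \frac{1}{p+1}\mathcal{P}(\Phi_0) \le \frac{1}{2}\|\nabla_{\mathbb{S}^2}\Phi_0\|_2^2$, so that $(M(\Phi_0))^\gamma + E(\Phi_0) \le \frac{1}{2}\|\nabla_{\mathbb{S}^2}\Phi_0\|_2^2 + (M(\Phi_0))^\gamma < d_{\mathbb{S}^2}$, and Theorem \ref{S^2thm}(A) finishes the proof. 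The hard part, and the step I expect to be the genuine obstacle, is that the coupling constants $\beta_{ij}$ are not assumed nonnegative, so $\mathcal{P}(\Phi_0)$ may be negative; then $-\frac{1}{p+1}\mathcal{P}(\Phi_0) > 0$ and the sublevel bound no longer follows by mere inequality from the hypothesis. In this defocusing-coupling regime one has instead $Q^{**}(\Phi_0) > \frac{1}{2}\|\nabla_{\mathbb{S}^2}\Phi_0\|_2^2 > 0$ and $E(\Phi_0) > 0$ for free, and I would argue globality directly by reproducing the continuity mechanism of part (A): as long as $Q^{**}(\Phi(t)) > 0$, energy conservation yields the a priori bound $\frac{p-5}{2(p-1)}\|\nabla_{\mathbb{S}^2}\Phi(t)\|_2^2 \le E(\Phi_0)$, so no blow-up occurs, while a first zero $Q^{**}(\Phi(t_0)) = 0$ (with $\Phi(t_0) \ne 0$ by mass conservation) would put $\Phi(t_0)$ on the admissible set and force $(M(\Phi_0))^\gamma + E(\Phi_0) \ge d_{\mathbb{S}^2}$. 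Reconciling this last inequality with the hypothesis precisely when $\mathcal{P}(\Phi_0) < 0$ is the delicate point I would need to resolve to complete the argument.
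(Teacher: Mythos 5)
Your route is the paper's route: the paper proves Corollary \ref{S^2cor} by ``following the proof of Theorem \ref{IIthm} and Corollary \ref{IIcor} with $p>5$ and $Q$ replaced by $Q^{**}$'', i.e.\ it reduces to Theorem \ref{S^2thm}(A) by checking the two hypotheses $(M(\Phi_0))^\gamma+E(\Phi_0)<d_{\mathbb{S}^2}$ and $Q^{**}(\Phi_0)>0$, the latter via scaling $\lambda\Phi_0$ to a zero of $Q^{**}$ and comparing with the minimization. Your verification of $Q^{**}(\Phi_0)>0$ is correct and in fact a small improvement on the paper's: using $E(u)=\frac{p-5}{2(p-1)}\|\nabla_{\mathbb{S}^2}u\|_2^2$ on the constraint set together with $\frac{p-5}{2(p-1)}<\frac12$ and $\lambda_0\in(0,1]$, you dispose of the cases $Q^{**}(\Phi_0)=0$ and $Q^{**}(\Phi_0)<0$ in one stroke and without any prior sublevel information, whereas the paper rules out the case $Q(\Phi_0)=0$ ``due to the minimization of $d_{II}$'', a step that already presupposes the sublevel bound.

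As for the point you flag as unresolved: you are right that the implication from the hypothesis to $(M(\Phi_0))^\gamma+E(\Phi_0)<d_{\mathbb{S}^2}$ uses $\mathcal{P}(\Phi_0)\ge 0$, but you should not expect to find the missing argument in the paper, because it is not there. The corresponding step in the proof of Corollary \ref{IIcor} --- which the $\mathbb{S}^2$ corollary inherits verbatim --- is the sentence ``it's obvious that $(M(\Phi_0))^\gamma+E(\Phi_0)<d_{II}$'', and this is obvious exactly when $E(\Phi_0)\le\frac12\|\nabla\Phi_0\|_2^2$, i.e.\ when $\mathcal{P}(\Phi_0)\ge 0$. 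That holds automatically for the single equation and whenever $\beta_{ij}\ge 0$ (the focusing regime the paper implicitly has in mind), but not for sign-indefinite couplings. So your proof is complete precisely where the paper's is, and the defocusing-coupling case you describe is a gap (or an unstated hypothesis) in the paper itself, not a resolution you failed to see; your fallback continuity argument is the correct mechanism but, as you note, it terminates at the same comparison that the hypothesis alone cannot settle. If you want an airtight statement, add the assumption $\mathcal{P}(\Phi_0)\ge 0$ (for instance $\beta_{ij}\ge 0$) and your write-up is then a full proof, essentially identical in strategy to, and locally cleaner than, the one in the paper.
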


The rest of our paper is organized as follows. In section 2, we
prepare some abstract analysis for the invariant sets. In section 3,
we give the proofs of Theorems \ref{Ithm}, \ref{IIthm} and
Corollaries \ref{Icor}, \ref{IIcor}. In section 4, we give the
proofs of Theorems \ref{H^nIthm}, \ref{H^nIIthm} and Corollaries
\ref{H^nIcor}, \ref{H^nIIcor}. In section 5, we give the proofs of
Theorem \ref{S^2thm} and Corollary \ref{S^2cor}. At the end, we give
some heuristic analysis about the strong instability of the solitary
waves in Section 6.

\section{Some abstract analysis}

In this section, we will establish the invariant sets for
(\ref{system}) via the three $\mathcal{C}^0$ functionals $M({\bf
u})$, $E({\bf u})$ and $Q({\bf u})$. Our analysis can be formed as
the following proposition. Let $\mathbb{M}=\mathbb{R}^n$,
$\mathbb{H}^n$ or $\mathbb{S}^2$.

\begin{Prop}\label{inv}
(Invariant Sets) Let $F({\bf u})$ and $G({\bf u})$ be two
$\mathcal{C}^0$ functionals on $(H^1(\mathbb{M}))^N$, and $f(x,y)$
be a $\mathcal{C}^0$ function on $\mathbb{R}^2$. Suppose that the
cross-constrained minimization problem
$$
d:=\inf_{\{{\bf u}\in (H^1(\mathbb{M}))^N\setminus\{0\};\ G({\bf
u})=0\}} F({\bf u})
$$
satisfies $d>0$. If in addition
\begin{align}
G({\bf u})=0\Rightarrow F({\bf u})\leq f(M({\bf u}),E({\bf
u})),\label{F<f}
\end{align}
then the sets
$$
K_+=\{{\bf u}\in (H^1(\mathbb{M}))^N;\ G({\bf{u}})>0,\ f(M({\bf
u}),E({\bf u}))<d\}
$$
and
$$
K_-=\{{\bf u}\in (H^1(\mathbb{M}))^N;\ G({\bf{u}})<0,\ f(M({\bf
u}),E({\bf u}))<d\}
$$
are all invariant sets of the Schr\"{o}dinger system (\ref{system})
on $\mathbb{M}$.
\end{Prop}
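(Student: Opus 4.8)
The plan is to exploit the conservation of mass and energy to reduce the claim to a sign-persistence statement for the functional $G$ along the flow. Fix initial data $\Phi_0\in K_+$ (the argument for $K_-$ is identical after reversing one inequality), and let $\Phi\in\mathcal{C}([0,T),(H^1(\mathbb{M}))^N)$ be the corresponding solution furnished by the local existence theory (Proposition \ref{localprop}). Since $M(\Phi(t))=M(\Phi_0)$ and $E(\Phi(t))=E(\Phi_0)$ for all $t\in[0,T)$ by (\ref{M}) and (\ref{E}), and since $f$ is continuous, the quantity $f(M(\Phi(t)),E(\Phi(t)))$ is constant in $t$ and equals $f(M(\Phi_0),E(\Phi_0))<d$. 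Thus the second defining condition of $K_+$ is preserved automatically, and everything hinges on showing $G(\Phi(t))>0$ for all $t$.

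For this, first observe that mass conservation keeps the solution away from the origin: since $\Phi_0\neq 0$ we have $M(\Phi(t))=M(\Phi_0)>0$, so $\Phi(t)\neq 0$ throughout. Next, because $\Phi(\cdot)$ is continuous into $(H^1(\mathbb{M}))^N$ and $G$ is a $\mathcal{C}^0$ functional, the scalar map $t\mapsto G(\Phi(t))$ is continuous on $[0,T)$. Suppose, for contradiction, that $G$ failed to remain positive. As $G(\Phi(0))=G(\Phi_0)>0$, the intermediate value theorem would produce a first time $t_0\in(0,T)$ with $G(\Phi(t_0))=0$. Then $\Phi(t_0)$ is a nonzero element of the constraint set $\{\mathbf{u}\neq 0;\ G(\mathbf{u})=0\}$, so by the definition of $d$ we get $F(\Phi(t_0))\geq d$.

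On the other hand, the hypothesis (\ref{F<f}) applies at $t_0$ precisely because $G(\Phi(t_0))=0$, yielding $F(\Phi(t_0))\leq f(M(\Phi(t_0)),E(\Phi(t_0)))=f(M(\Phi_0),E(\Phi_0))<d$. Combining the two estimates gives $d\leq F(\Phi(t_0))<d$, a contradiction. Hence $G(\Phi(t))>0$ for all $t\in[0,T)$, and $\Phi(t)\in K_+$ throughout; the same reasoning with the sign of $G$ reversed shows $K_-$ is invariant. The one step requiring care is guaranteeing that the candidate $\Phi(t_0)$ is admissible for the variational problem defining $d$ — that is, that it is genuinely nonzero — which is exactly what mass conservation delivers; without it the contradiction with $d>0$ could not be drawn.
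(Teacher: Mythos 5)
Your proof is correct and follows essentially the same route as the paper's: conservation of mass and energy fixes $f(M(\Phi),E(\Phi))<d$ along the flow, and a continuity/intermediate-value argument shows a sign change of $G$ would produce a nonzero constraint point violating the minimality of $d$. Your explicit observation that mass conservation guarantees $\Phi(t_0)\neq 0$ is a small but welcome addition to what the paper leaves implicit.
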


\begin{proof}

Assume $\Phi_0\in K_+$, that is, $G(\Phi_0)>0$ and
$f(M(\Phi_0),E(\Phi_0))<d$. Noticing that $M(\Phi)$ and $E(\Phi)$
are conservation quantities for (\ref{system}), we have
$$
f(M(\Phi),E(\Phi))=f(M(\Phi_0),E(\Phi_0))<d.
$$
We now show that $G(\Phi)>0$. Otherwise, from the continuity, there
were a $t^*\in(0,T)$ such that $G(\Phi(t^*))=0$ and $\Phi(t^*)\neq
0$. We infer from (\ref{F<f}) that
$$
F(\Phi(t^*))\leq f(M(\Phi(t^*)),E(\Phi(t^*)))<d,
$$
which is a contradiction with the minimization of $d$. Thus we get
that $G(\Phi)>0$ and therefore $\Phi\in K_+$.

By the same argument, we have $K_-$ is also invariant under the flow
generated by (\ref{system}).

\end{proof}

\begin{Rk}
The idea of this proposition goes back to H. Berestycki and T.
Cazenave \cite{H.Be}. However, they restricted themselves only to
the case
$$
f(M,E)=M+E.
$$
As a consequence, they obtained the invariant sets for the
Schr\"{o}dinger equation only on $\mathbb{R}^2$. The reader will see
below that we introduce
\begin{align}
f(M,E)=M^\gamma+E\label{idea}
\end{align}
to enlarge the invariant sets of the Schr\"{o}dinger equation on
$\mathbb{R}^2$ to the Schr\"{o}dinger system (\ref{system}) on
$\mathbb{R}^n$ for all $n\geq 1$ and on some other Riemannian
manifolds. The power $\gamma>0$ in (\ref{idea}) relies heavily on
the Gagliardo-Nirenberg inequality.
\end{Rk}

Suppose we already get that
$$
K_+=\{{\bf u}\in (H^1(\mathbb{M}))^N;\ G({\bf{u}})>0,\ f(M({\bf
u}),E({\bf u}))<d\}
$$
and
$$
K_-=\{{\bf u}\in (H^1(\mathbb{M}))^N;\ G({\bf{u}})<0,\ f(M({\bf
u}),E({\bf u}))<d\}
$$
are invariant sets of (\ref{system}). Moreover, if we can show that
there exist two constants $M$, $\delta$ such that
$$
\Phi_0\in K_+\Rightarrow \|\nabla_{\mathbb{M}}\Phi\|_2\leq M<\infty
$$
and
$$
\Phi_0\in K_-\Rightarrow J^{''}(t)\leq\delta<0\ \textrm{or}\
J^{''}(t)<0\ \textrm{and}\ J'(0)\leq 0\ \textrm{simultaneously},
$$
then we arrive at the conclusion that $\Phi_0\in K_+$ implies the
solution exists globally and $\Phi_0\in K_-$ implies that the
solution blows up in finite time. In this sense, under the
assumption $f(M(\Phi_0),E(\Phi_0))<d$, we say $G(\Phi_0)=0$ is a
sharp threshold of blow-up and global existence.

\section{The proofs of Theorems \ref{Ithm}, \ref{IIthm} and Corollaries \ref{Icor}, \ref{IIcor}}

This section is devoted to the proofs of Theorems \ref{Ithm},
\ref{IIthm} and Corollaries \ref{Icor}, \ref{IIcor}. Let's  recall
the Gagliardo-Nirenberg inequality (\cite{We1}) for $1\leq
p<1+4/(n-2)^+$:
\begin{align}
\|\phi\|_{p+1}^{p+1}\leq C\|\nabla
\phi\|^{n(p-1)/2}_2\|\phi\|_2^{p+1-n(p-1)/2},\ \ \forall\ \phi\in
H^1(\mathbb{R}^n).\label{Gag}
\end{align}

{\bf The proof of Theorem \ref{Ithm}.}

Step 1. We claim that the constrained variational problem in Theorem
\ref{Ithm} satisfies $d_I>0$. For ${\bf u}\in
(H^1(\mathbb{R}^n))^N\setminus\{0\}$ subjected to $G({\bf u})=0$, it
follows from (\ref{Gag}) that
\begin{align*}
(M({\bf
u}))^{p+1-n(p-1)/2}&=\frac{1}{p+1}\mathcal{P}(\Phi)\\
&\leq C\|\nabla{\bf u}\|_2^{n(p-1)/2}(M({\bf u}))^{p+1-n(p-1)/2},
\end{align*}
which indicates $d_I>0$.

Step 2. Choosing $F({\bf u})=\frac{1}{2}\|\nabla{\bf u}\|_2^2$ and
$f(M,E)=M^{p+1-n(p-1)/2}+E$ in Proposition \ref{inv}, we see that
$$
K_+=\{{\bf u}\in (H^1(\mathbb{R}^n))^N;\ G({\bf{u}})>0,\ (M({\bf
u}))^{p+1-n(p-1)/2}+E({\bf u})<d_I\}
$$
and
$$
K_-=\{{\bf u}\in (H^1(\mathbb{R}^n))^N;\ G({\bf{u}})<0,\ (M({\bf
u}))^{p+1-n(p-1)/2}+E({\bf u})<d_I\}
$$
are invariant sets of (\ref{system}).

Step 3. Assume that $\Phi_0$ satisfies $G(\Phi_0)>0$. Then from step
2, we have $G(\Phi)>0$ and $(M(\Phi))^{p+1-n(p-1)/2}+E(\Phi)<d_I$,
which imply
$$
\frac{1}{2}\|\nabla\Phi\|_2^2<d_I,
$$
and consequently the solution exists globally.

Step 4. Assume that $\Phi_0$ satisfies $G(\Phi_0)<0$. From step 2,
we have $G(\Phi)<0$ and $(M(\Phi))^{p+1-n(p-1)/2}+E(\Phi)<d_I$.

Case(i): $p=1+4/n$. In this case, $Q(\Phi)=E(\Phi)=E(\Phi_0)$. From
$G(\Phi_0)<0$ we get that there exists a $\lambda\in(0,1)$ such that
$G(\lambda\Phi_0)=0$, that is,
\begin{align}
(M(\Phi_0))^{p-1}=\frac{\lambda^2}{p+1}\mathcal{P}(\Phi).\label{F>d>f1}
\end{align}
Then it follows from the minimization of $d_I$ that
\begin{align}
\frac{1}{2}\|\nabla(\lambda\Phi_0)\|_2^2\geq
d_I>(M(\Phi_0))^{p-1}+E(\Phi_0).\label{F>d>f2}
\end{align}

Inserting (\ref{F>d>f1}) into (\ref{F>d>f2}) yields
\begin{align*}
\frac{\lambda^2}{2}\|\nabla\Phi_0\|_2^2\geq
d_I>(M(\Phi_0))^{p-1}+E(\Phi_0)=\frac{1}{2}\|\nabla\Phi_0\|_2^2+\frac{\lambda^2-1}{p+1}\mathcal{P}(\Phi),
\end{align*}
that is, $(1-\lambda^2)E(\Phi_0)<0$. Thus we have
$J^{''}(t)=16E(\Phi_0)<0$ and therefore the solution blows up in
finite time.

Case (ii). $p>1+4/n$. In this case, for any fixed $t\in(0,T)$, there
exists a $\lambda\in(0,1)$ such that $G(\lambda\Phi)=0$, that is,
\begin{align}
(M(\Phi))^{p+1-n(p-1)/2}=\frac{\lambda^{n(p-1)/2}}{p+1}\mathcal{P}(\Phi).\label{F>d>f3}
\end{align}
Then it follows from the minimization of $d_I$ that
\begin{align}
\frac{1}{2}\|\nabla(\lambda\Phi)\|_2^2\geq
d_I>(M(\Phi))^{p+1-n(p-1)/2}+E(\Phi).\label{F>d>f4}
\end{align}

Inserting (\ref{F>d>f3}) into (\ref{F>d>f4}) yields
\begin{align*}
\frac{\lambda^2}{2}\|\nabla\Phi\|_2^2\geq
d_I&>(M(\Phi))^{p+1-n(p-1)/2}+E(\Phi)\\
&=\frac{1}{2}\|\nabla\Phi\|_2^2+\frac{\lambda^{n(p-1)/2}-1}{p+1}\mathcal{P}(\Phi),
\end{align*}
that is,
\begin{align*}
\frac{1}{2}\|\nabla\Phi\|_2^2\leq
\frac{\lambda^{-n(p-1)/2}-1}{1-\lambda^2}(M(\Phi))^{p+1-n(p-1)/2}.
\end{align*}
We infer from the above inequality that
\begin{align*}
J^{''}(t)&=Q(\Phi)\\
&=\frac{1}{2}\|\nabla\Phi\|_2^2-\frac{n(p-1)}{4}\lambda^{-n(p-1)/2}(M(\Phi))^{p+1-n(p-1)/2}\\
&\leq h(\lambda)(M(\Phi))^{p+1-n(p-1)/2}<0,
\end{align*}
with the fact
\begin{align*}
h(\lambda)=\frac{\lambda^{-n(p-1)/2}}{1-\lambda^2}(1-\lambda^{n(p-1)/2}-\frac{n}{4}(p-1)(1-\lambda^2))<0,\
\ \forall\ \lambda\in(0,1)
\end{align*}
used in the last step.

Thus we get that $J'(0)\leq 0$ and $J^{''}(t)<0$, which suggest that
the solution blows up in finite time. The proof of Theorem
\ref{Ithm} is concluded.

\hfill $\Box$

{\bf The proof of Corollary \ref{Icor}.}

From the assumption
$$
\frac{1}{2}\|\nabla\Phi_0\|^2_2+(M(\Phi_0))^{p+1-n(p-1)/2}<d_I,
$$
it's obvious that
$$
(M(\Phi_0))^{p+1-n(p-1)/2}+E(\Phi_0)<d_I.
$$
In view of Theorem \ref{Ithm}, we only have to check that
$G(\Phi_0)>0$. If else, one would have $G(\Phi_0)\leq 0$. Due to the
minimization of $d_I$, $G(\Phi_0)\neq 0$. If $G(\Phi_0)<0$, there
exists a $\lambda\in(0,1)$ such that $G(\lambda\Phi_0)=0$ and
consequently we have
$$
\frac{1}{2}\|\nabla(\lambda\Phi_0)\|_2^2\geq d_I,
$$
which is contradictory with
$$
\frac{1}{2}\|\nabla\Phi_0\|_2^2<d_I.
$$

\hfill $\Box$

{\bf The proof of Theorem \ref{IIthm}.}

Step 1. The constrained variational problem in Theorem \ref{IIthm}
satisfies $d_{II}>0$. We argue by contradiction. Suppose there
exists a sequence ${\bf u}_k\in(H^1(\mathbb{R}^n))^N\setminus\{0\}$
satisfying $Q({\bf u}_k)=0$ and $(M({\bf u}_k))^\gamma+E({\bf
u}_k)\rightarrow 0$ as $k\rightarrow 0$. By $Q({\bf u}_k)=0$ we get
that
$$
(M({\bf u}_k))^\gamma+E({\bf u}_k)=(M({\bf
u}_k))^\gamma+\frac{n(p-1)-4}{2n(p-1)}\|\nabla{\bf
u}_k\|^2_2\rightarrow 0,
$$
which indicates that $M({\bf u}_k)\rightarrow 0$ and $\|\nabla{\bf
u}_k\|_2\rightarrow 0$. On the other hand, by the
Gagliardo-Nirenberg inequality, we get from $G({\bf u}_k)=0$ that
\begin{align*}
\frac{1}{2}\|\nabla{\bf
u}_k\|_2^2=\frac{n(p-1)}{4(p+1)}\mathcal{P}(\Phi)\leq C\|\nabla{\bf
u}_k\|_2^{n(p-1)/2}(M({\bf u}_k))^{(p+1)-n(p-1)/2},
\end{align*}
that is
$$
\|\nabla{\bf u}_k\|_2^{n(p-1)/2-2}(M({\bf
u}_k))^{(p+1)-n(p-1)/2}\geq\frac{1}{2C}>0,
$$
which contradicts with $M({\bf u}_k)\rightarrow 0$ and $\|\nabla{\bf
u}_k\|_2\rightarrow 0$.

Step 2. Choosing $F({\bf u})=(M({\bf u}))^\gamma+E({\bf u})$ and
$f(M,E)=M^\gamma+E$ in Proposition \ref{inv}, we have that
$$
K_+=\{{\bf u}\in (H^1(\mathbb{R}^n))^N;\ Q({\bf{u}})>0,\ (M({\bf
u}))^\gamma+E({\bf u})<d_{II}\}
$$
and
$$
K_-=\{{\bf u}\in (H^1(\mathbb{R}^n))^N;\ Q({\bf{u}})<0,\ (M({\bf
u}))^\gamma+E({\bf u})<d_{II}\}
$$
are invariant sets of (\ref{system}).

Step 3. Assume that $\Phi_0$ satisfies $Q(\Phi_0)>0$. Then from step
2, we have $Q(\Phi)>0$ and $(M(\Phi))^\gamma+E(\Phi)<d_{II}$, which
imply
$$
\frac{n(p-1)-4}{2n(p-1)}\|\nabla\Phi\|_2^2<d_{II},
$$
and consequently the solution exists globally.

Step 4. Assume that $\Phi_0$ satisfies $Q(\Phi_0)<0$. From step 2,
we have $Q(\Phi)<0$ and $(M(\Phi))^\gamma+E(\Phi)<d_{II}$. We assert
that
$$
Q(\Phi)\leq (M(\Phi_0))^\gamma+E(\Phi_0)-d_{II}<0,
$$
following which the solution blows up in finite time.

In actuality, the fact $Q(\Phi)<0$ yields a $\lambda\in(0,1)$ such
that $Q(\lambda\Phi)=0$ and accordingly
$(M(\lambda\Phi))^\gamma+E(\lambda\Phi)\geq d_{II}$. Moreover,
$Q(\Phi)<0$ implies that $\mathcal{P}(\Phi)>0$. Next, we do
computation to achieve
\begin{align*}
&\quad(M(\Phi_0))^\gamma+E(\Phi_0)-d_{II}\\
&\geq[(M(\Phi))^\gamma+E(\Phi)]-[(M(\lambda\Phi))^\gamma+E(\lambda\Phi)]\\
&=(1-\lambda^{\gamma})(M(\Phi))^\gamma+\frac{1}{2}(1-\lambda^2)\|\nabla\Phi\|_2^2-\frac{1-\lambda^{p+1}}{p+1}\mathcal{P}(\Phi)\\
&\geq\frac{1}{2}(1-\lambda^2)\|\nabla\Phi\|_2^2-\frac{n(p-1)(1-\lambda^{p+1})}{4(p+1)}\mathcal{P}(\Phi)\\
&=Q(\Phi)-Q(\Phi_\lambda)=Q(\Phi).
\end{align*}
This concludes the proof of Theorem \ref{IIthm}.

\hfill $\Box$

{\bf The proof of Corollary \ref{IIcor}.}

From the assumption
$$
\frac{1}{2}\|\nabla\Phi_0\|^2_2+(M(\Phi_0))^\gamma<d_{II},
$$
it's obvious that
$$
(M(\Phi_0))^\gamma+E(\Phi_0)<d_{II}.
$$
In view of Theorem \ref{IIthm}, we only have to check that
$Q(\Phi_0)>0$. If else, one would have $Q(\Phi_0)\leq 0$. Due to the
minimization of $d_{II}$, $Q(\Phi_0)\neq 0$. If $Q(\Phi_0)<0$, there
exists a $\lambda\in(0,1)$ such that $Q(\lambda\Phi_0)=0$ and
consequently we have
\begin{align*}
&\quad(M(\lambda\Phi_0))^\gamma+E(\lambda\Phi_0)\geq d_{II}\\
&\Rightarrow\frac{\lambda^2}{2}\|\nabla\Phi_0\|_2^2+\lambda^\gamma
(M(\Phi_0))^\gamma\geq d_{II},
\end{align*}
which is contradictory with
$$
\frac{1}{2}\|\nabla\Phi_0\|^2_2+(M(\Phi_0))^\gamma<d_{II}.
$$

\hfill $\Box$

\section{The proofs of Theorems \ref{H^nIthm}, \ref{H^nIIthm} and Corollaries \ref{H^nIcor}, \ref{H^nIIcor}}

In this section, we focus on the Schr\"{o}dinger system on
$\mathbb{H}^n$. The Sobolev inequality on the hyperbolic space (see
\cite{E.H}) writes as
$$
\|\phi\|_{2n/(n-2)}\leq
K_n\|\nabla_{\mathbb{H}^n}\phi\|_2-\omega_n^{-2/n}\|\phi\|_2,\ \
\forall\ \phi\in H^1(\mathbb{H}^n),
$$
where $K_n$ is the best constant for the Sobolev embedding on
$\mathbb{R}^n$, and $\omega_n$ is the volume of the sphere
$\mathbb{S}^n$. By interpolation between the $L^2$ and the
$L^{2n/(n-2)}$ norms, we get the Gagliardo-Nirenberg inequality for
functions on $H^1(\mathbb{H}^n)$ for $1\leq p<1+4/(n-2)^+$:
\begin{align*}
\|\phi\|_{p+1}^{p+1}\leq C\|\nabla_{\mathbb{H}^n}
\phi\|^{n(p-1)/2}_2\|\phi\|_2^{p+1-n(p-1)/2},\ \ \forall\ \phi\in
H^1(\mathbb{H}^n).
\end{align*}

Let's firstly consider the radial case.

{\bf The proofs of Theorem \ref{H^nIthm} and the radial case of
Theorem \ref{H^nIIthm}.}

If the initial data $\Phi_0$ is radial about the origin
$O\in\mathbb{H}^n$, by the symmetry of the system (\ref{system}) we
see easily the solution $\Phi$ is also radial. We take $\rho=r^2$,
where $r=r(x)$ is the geodesic distance from $x\in\mathbb{H}^n$ to
$O\in\mathbb{H}^n$. By the noteworthy estimates (see \cite{V.B2} for
details)
\begin{align*}
\left\{\begin{array}{ll}
D^2\rho(\nabla_{\mathbb{H}^n}\phi_j,\nabla_{\mathbb{H}^n}\bar{\phi_j})\leq
2|\nabla_{\mathbb{H}^n}\phi_j|^2,\\
\Delta^2_{\mathbb{H}^n}\rho>0,\\
\Delta_{\mathbb{H}^n}\rho\geq 2n,
\end{array}
\right.
\end{align*}
we indicate from (\ref{J"}) that
\begin{align*}
J^{''}(t)\leq 16 Q(\Phi)
\end{align*}
with $Q(\Phi)$ defined as in (\ref{Q}). Then the proofs of Theorem
\ref{H^nIthm}, the radial case of Theorem \ref{H^nIIthm} proceed
exactly the same as the ones of Theorems \ref{Ithm}, \ref{IIthm}.

\hfill $\Box$

Now we turn to the nonradial case.

{\bf The proof of the nonradial case of Theorem \ref{H^nIIthm}.}

When $\Phi$ is nonradial, the crucial estimate
$$
D^2\rho(\nabla_{\mathbb{H}^n}\phi_j,\nabla_{\mathbb{H}^n}\bar{\phi_j})\leq
2|\nabla_{\mathbb{H}^n}\phi_j|^2
$$
doesn't hold. We choose another positive radial function
$$
\rho(r)=\int_0^r(\int_0^s\sinh^{n-1}\tau
d\tau)(\sinh^{n-1}s)^{-1}ds,
$$
which satisfies (see \cite{L} for details)
\begin{align*}
\left\{\begin{array}{ll}
D^2\rho(\nabla_{\mathbb{H}^n}\phi_j,\nabla_{\mathbb{H}^n}\bar{\phi_j})\leq
\frac{1}{n-1}|\nabla_{\mathbb{H}^n}\phi_j|^2,\\
\Delta_{\mathbb{H}^n}\rho=1.
\end{array}
\right.
\end{align*}
Then from (\ref{J"}) we obtain that
\begin{align*}
J^{''}(t)\leq \frac{8}{n-1} Q^*(\Phi)
\end{align*}
Following the proof of Theorem \ref{IIthm} with the modification
that $Q(\Phi)$ is substituted by $Q^*(\Phi)$ and $p>1+4/n$ is
substituted by $p>1+4/(n-1)$, we easily arrive at the conclusions of
Theorem \ref{H^nIIthm}.

\hfill $\Box$

{\bf The proof of Corollaries \ref{H^nIcor}, \ref{H^nIIcor}.}

The idea to prove Corollary \ref{H^nIcor} is the same as the proof
of Corollary \ref{Icor}. In fact, in view of Proposition \ref{inv}
in section 2, we see that
$$
K_+=\{\Phi\in (H^1(\mathbb{H}^n))^N;\ G(\Phi)>0,\
(M(\Phi))^{p+1-n(p-1)/2}+E(\Phi)<d_{\mathbb{H}^nI}\}
$$
is an invariant set under the flow generated by the Schr\"{o}dinger
system (\ref{system}) on $\mathbb{H}^n$. Once $\Phi\in K_+$, it
follows that
$$
\frac{1}{2}\|\nabla_{\mathbb{H}^n}\Phi\|<d_{\mathbb{H}^n},
$$
which yields the global existence of the solution $\Phi$. We check
as exactly as we did in the proof of Corollary \ref{IIcor} that
$$
\Phi_0\in K_+
$$
and subsequently the proof of Corollary \ref{H^nIcor} is concluded.
The proof of Corollary \ref{H^nIIcor} proceeds along the way of the
proof of Corollary \ref{IIcor} similarly, and the details are
omitted.

\hfill $\Box$

\section{The proof of Theorem \ref{S^2thm} and Corollary \ref{S^2cor}}

In this section, we complete the proofs of Theorem \ref{S^2thm} and
Corollary \ref{S^2cor}. The Sobolev embedding has its analogue on
$\mathbb{S}^n$. See the following proposition, which is taken from
\cite{E.H}.

\begin{Prop}\label{Sobprop1}
Assume $2\leq p\leq 2n/(n-2)$ when $n\geq 3$ and $2\leq p<\infty$
when $n=2$. Then for any $\phi\in H^1(\mathbb{S}^n)$, there holds
$$
(\int_{\mathbb{S}^n}|\phi|^p)^{2/p}\leq
\frac{p-2}{n\omega_n^{1-2/p}}\int_{\mathbb{S}^n}|\nabla_{\mathbb{S}^n}\phi|^2
+\frac{1}{\omega_n^{1-2/p}}\int_{\mathbb{S}^n}|\phi|^2.
$$
\end{Prop}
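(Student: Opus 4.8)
The plan is to strip off the volume factors first, reduce to a scale-free inequality with respect to the normalized measure, and then treat the conformally critical exponent and the subcritical range by different arguments, the latter being the real difficulty. First I would pass to the normalized probability measure $d\sigma=\omega_n^{-1}\,dV_{\mathbb{S}^n}$. Tracking the powers of $\omega_n$ through each term shows that the asserted inequality is equivalent to the scale-free statement
\[
\left(\int_{\mathbb{S}^n}|\phi|^p\,d\sigma\right)^{2/p}\le \frac{p-2}{n}\int_{\mathbb{S}^n}|\nabla_{\mathbb{S}^n}\phi|^2\,d\sigma+\int_{\mathbb{S}^n}|\phi|^2\,d\sigma,
\]
in which the coefficient $\frac{p-2}{n}$ carries no $\omega_n$ and the constant functions saturate the bound for every $p$. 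All subsequent steps address this normalized form.

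Next I would record why the constant must be exactly $\frac{p-2}{n}$, which also guides the proof. Linearizing about a constant, take $\phi=1+\epsilon Y$ with $Y$ a first spherical harmonic, so that $\int_{\mathbb{S}^n}Y\,d\sigma=0$ and $\int_{\mathbb{S}^n}|\nabla_{\mathbb{S}^n}Y|^2\,d\sigma=n\int_{\mathbb{S}^n}Y^2\,d\sigma$ (the first nonzero eigenvalue of $-\Delta_{\mathbb{S}^n}$ being $\lambda_1=n$). Expanding both sides to order $\epsilon^2$ shows that each equals $1+(p-1)\epsilon^2\int_{\mathbb{S}^n}Y^2\,d\sigma+O(\epsilon^3)$; equality at second order pins down the constant and exhibits $\lambda_1=n$ as the origin of the factor $n$. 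This is the sharpness statement I must not lose.

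For the conformally critical exponent $p=2n/(n-2)$ (only when $n\ge 3$) the inequality is classical. Here $\frac{p-2}{n}=\frac{4}{n(n-2)}$, so the right-hand side equals $\frac{4}{n(n-2)}\int_{\mathbb{S}^n}\phi\,L\phi\,d\sigma$ where $L=\Delta_{\mathbb{S}^n}+\frac{n(n-2)}{4}$ is the conformal Laplacian of the round metric, and the inequality becomes the conformally invariant (Yamabe) sharp Sobolev inequality on $\mathbb{S}^n$. I would prove it by stereographic projection onto $\mathbb{R}^n$, under which the conformal energy transforms into the Euclidean Dirichlet energy, reducing the statement to the Aubin--Talenti sharp Euclidean Sobolev inequality; the extremals are the orbit of the constant under the conformal group.

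The hard part will be the subcritical range $2<p<2n/(n-2)$ (and, when $n=2$, the entire range $2<p<\infty$, where there is no critical endpoint to anchor the argument). Plain interpolation fails to deliver the sharp constant: combining the $p=2$ identity with the critical inequality via a weighted arithmetic--geometric mean produces the coefficient $\frac{2(p-2)}{p(n-2)}$, which strictly exceeds the required $\frac{p-2}{n}$ precisely when $p<2n/(n-2)$, so it yields only a weaker bound. One therefore needs the sharp subcritical family on its own terms. The route I would take is to symmetrize $\phi$ about a pole (two-point rearrangement in the spirit of competing symmetries), reducing to functions of the polar angle and hence to a one-dimensional ultraspherical (Sturm--Liouville/Jacobi) inequality whose sharp constant is governed by $\lambda_1=n$; alternatively one can use the Funk--Hecke formula to recast the estimate as a sharp Hardy--Littlewood--Sobolev-type inequality on the sphere and argue as Beckner does. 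Establishing the sharp constant uniformly in $p$ through this rearrangement step is the crux of the proof.
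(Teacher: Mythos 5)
The paper offers no proof of Proposition \ref{Sobprop1} at all: it is quoted verbatim from Hebey's book \cite{E.H}, so there is nothing internal to compare your argument against. Judged on its own terms, your reduction to the normalized measure is correct (the powers of $\omega_n$ do cancel as you claim), your linearization $\phi=1+\epsilon Y$ correctly identifies $\lambda_1=n$ as the source of the constant $\frac{p-2}{n}$ and shows it cannot be improved, and the critical case $p=2n/(n-2)$ via stereographic projection to the Aubin--Talenti inequality is the standard and correct argument (modulo a sign: you want $L=-\Delta_{\mathbb{S}^n}+\frac{n(n-2)}{4}$ for $\int\phi L\phi$ to equal the right-hand side). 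You are also right that naive interpolation between $p=2$ and the critical exponent degrades the constant to $\frac{2(p-2)}{p(n-2)}>\frac{p-2}{n}$.

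The genuine gap is exactly where you locate it: the subcritical range $2<p<2n/(n-2)$ (and all of $2<p<\infty$ when $n=2$). Cap symmetrization does legitimately reduce to functions of the polar angle, but the resulting one-dimensional ultraspherical inequality with constant $\frac{p-2}{n}$ \emph{is} the theorem restricted to radial functions --- naming it does not prove it. The actual proofs in the literature (Bidaut-V\'eron--V\'eron via the Bochner--Lichnerowicz--Weitzenb\"ock formula applied to extremals of the associated elliptic equation, or Beckner via Funk--Hecke and a spectral estimate on each degree of spherical harmonics) each require a substantive further idea that your sketch invokes by citation rather than supplies. So what you have is a correct roadmap to a known deep theorem, not a proof. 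It is worth noting that for the paper's actual use of this proposition --- combining it with Proposition \ref{Sobprop2} to obtain (\ref{Sob}) with an unspecified constant $C$ --- the sharp constants are irrelevant, and a non-sharp Sobolev embedding on $\mathbb{S}^2$ (obtainable by entirely elementary means, e.g. local charts and interpolation) would serve just as well; you are attempting to prove considerably more than the application requires.
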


Hereafter we concentrate on $\mathbb{S}^2$ and work on the space
$$
\Lambda:=\{{\bf u}\in (H^1(\mathbb{S}^2))^N\setminus\{0\};\ {\bf u}\
\textrm{is antisymmetric about the equator}\}.
$$
We have the following estimate.

\begin{Prop}\label{Sobprop2}
For any function $\phi\in H^1(\mathbb{S}^n)$ which is antisymmetric
about the equator, there holds
$$
\|\phi\|_2\leq 4\|\nabla_{\mathbb{S}^2}\phi\|_2.
$$
\end{Prop}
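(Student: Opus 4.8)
The plan is to recognize this as a Poincar\'e (spectral gap) inequality on the closed manifold $\mathbb{S}^2$, in which the antisymmetry hypothesis plays exactly one role: it eliminates the constant functions, which are the sole obstruction to such an inequality (they have vanishing gradient but nonzero $L^2$ norm). First I would note that if $\phi$ is antisymmetric about the equator, i.e. $\phi\circ R=-\phi$ where $R$ is the isometric reflection across the equatorial plane, then since $R$ preserves the volume measure one has $\int_{\mathbb{S}^2}\phi=\int_{\mathbb{S}^2}\phi\circ R=-\int_{\mathbb{S}^2}\phi$, hence $\int_{\mathbb{S}^2}\phi=0$. Thus $\phi$ is orthogonal in $L^2(\mathbb{S}^2)$ to the kernel of $-\Delta_{\mathbb{S}^2}$.

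The main step is then the spectral estimate. The eigenvalues of $-\Delta_{\mathbb{S}^2}$ are $\ell(\ell+1)$ for $\ell=0,1,2,\dots$, with the constants spanning the $\ell=0$ eigenspace and first nonzero eigenvalue $\lambda_1=2$. Expanding $\phi=\sum_{\ell\ge 1}\sum_m c_{\ell m}Y_\ell^m$ (no $\ell=0$ term, by the mean-zero property), I obtain
$$\|\nabla_{\mathbb{S}^2}\phi\|_2^2=\sum_{\ell\ge 1}\sum_m \ell(\ell+1)|c_{\ell m}|^2\ge 2\sum_{\ell\ge 1}\sum_m|c_{\ell m}|^2=2\|\phi\|_2^2.$$
Hence $\|\phi\|_2\le\tfrac{1}{\sqrt 2}\|\nabla_{\mathbb{S}^2}\phi\|_2$, which is far stronger than the claimed constant $4$, so the generous factor leaves ample slack.

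If one prefers a self-contained argument that avoids quoting the spectrum, I would reduce to a one-dimensional weighted Hardy inequality along meridians. Writing points as $(\theta,\varphi)$ with $\theta$ the polar angle, antisymmetry gives $\phi(\pi/2,\varphi)=0$; discarding the nonnegative $\varphi$-derivative term and using that $|\phi|^2$ and $(\partial_\theta\phi)^2$ are both invariant under $\theta\mapsto\pi-\theta$ reduces the claim to showing, for fixed $\varphi$ and $g(\theta)=\phi(\theta,\varphi)$ with $g(\pi/2)=0$, the estimate $\int_0^{\pi/2}g^2\sin\theta\,d\theta\le C\int_0^{\pi/2}(g')^2\sin\theta\,d\theta$. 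Substituting $u=\cos\theta$ turns this into $\int_0^1 g^2\,du\le C\int_0^1 (g')^2(1-u^2)\,du$ with $g(0)=0$; writing $g(u)=\int_0^u g'(t)\,dt$ and applying Cauchy--Schwarz against the weight $1-t^2$ gives $C=\int_0^1\operatorname{arctanh}(u)\,du=\ln 2$, again comfortably below the threshold $16$ needed for the squared inequality.

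The delicate points here are not the inequality itself but its hypotheses. I would need to justify that an $H^1(\mathbb{S}^2)$ function antisymmetric about the equator has zero mean (immediate from the reflection argument above) and, for the elementary route, that it possesses a well-defined vanishing trace on the equator and restricts to an $H^1$ function on almost every meridian (a standard Fubini argument for Sobolev functions). Once these are established, either the spectral gap $\lambda_1=2$ or the $\ln 2$ Hardy constant closes the estimate with the large factor $4$ entirely to spare, so the crude bound is never in jeopardy.
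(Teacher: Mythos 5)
Your proof is correct, but it takes a genuinely different route from the paper's. The paper proves the inequality by integrating by parts against the auxiliary functions $\rho_\pm$ (the same ones that drive the virial estimate later in the section), which satisfy $\Delta_{\mathbb{S}^2}\rho_\pm=1$ and $|\nabla_{\mathbb{S}^2}\rho_\pm|\le 1$ on the two hemispheres: writing $\int_{\mathbb{S}^2}|\phi|^2=\int_{\mathbb{S}^2_+}|\phi|^2\Delta_{\mathbb{S}^2}\rho_+\,+\int_{\mathbb{S}^2_-}|\phi|^2\Delta_{\mathbb{S}^2}\rho_-$ and integrating by parts --- antisymmetry enters only through the vanishing of $\phi$ on the equator, which kills the boundary terms --- yields $\|\phi\|_2^2\le 2\|\phi\|_2\|\nabla_{\mathbb{S}^2}\phi\|_2$ and hence the constant $2$. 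You instead use antisymmetry only through its weaker consequence $\int_{\mathbb{S}^2}\phi=0$ and invoke the spectral gap $\lambda_1=2$ of $-\Delta_{\mathbb{S}^2}$, obtaining the sharper constant $1/\sqrt{2}$ and, in fact, the stronger statement that the inequality holds for every mean-zero $H^1$ function. What the paper's argument buys is self-containedness (no spectral theory) and coherence with the rest of Section 5, where the identical $\rho_\pm$ give $J''\le 8Q^{**}(\Phi)$; what yours buys is a better constant under a weaker hypothesis. Your backup Hardy-type computation along meridians is also sound (the substitution $u=\cos\theta$ and the weighted Cauchy--Schwarz do give the constant $\ln 2$), though it carries the extra burden, which you rightly flag, of justifying the vanishing trace on the equator and the a.e.\ restriction to meridians --- issues the spectral route avoids entirely. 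Either way the claimed factor $4$ holds with room to spare.
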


Before proving this proposition, we list some facts which will be
used in the sequel. In the paper of the same authors \cite{L}, we
introduce the positive function
\begin{align*}
\rho(r)=\left\{\begin{array}{ll}
-2\log\cos(r/2),\ \ 0<r\leq\pi/2,\\
0,\ \ r=0.
\end{array}
\right.
\end{align*}
We cut off the sphere $\mathbb{S}^2$ along the equator into two
hemispheres $\mathbb{S}_+^2$ and $\mathbb{S}_-^2$, which contains
the north pole $N\in\mathbb{S}^2$ and the south pole
$S\in\mathbb{S}^2$ respectively. If we regard $r=r(x)$ as the sphere
distance between the point $x\in\mathbb{S}^2$ to $N\in\mathbb{S}^2$
or to $S\in \mathbb{S}^2$, then $\rho=\rho(r)$ is a $\mathcal{C}^4$
function defined on $\mathbb{S}^2_+$ radial about $N$ or defined on
$\mathbb{S^2_-}$ radial about $S$. We denote it by $\rho_+$ and
$\rho_-$ respectively. An remarkable property of $\rho_+$ and
$\rho_-$ is that
$$
\Delta_{\mathbb{S}^2}\rho_{\pm}=1.
$$
Furthermore, we have (see \cite{L} for details)
\begin{align*}
\left\{\begin{array}{ll} |\nabla_{\mathbb{S}^2}\rho_{\pm}|\leq 1,\\
D^2\rho_{\pm}(\nabla_{\mathbb{S}^2}\phi,\nabla_{\mathbb{S}^2}\bar{\phi})\leq|\nabla_{\mathbb{S}^2}\phi|^2,\
\ \forall\ \phi\in H^1(\mathbb{S}^2_{\pm}).
\end{array}
\right.
\end{align*}

We now prove Proposition \ref{Sobprop2}.
\begin{proof}

Noticing that $\phi=0$ on
$\partial\mathbb{S}^2_+=\partial\mathbb{S}^2_-$, we can use the
technique of integration by parts to obtain that
\begin{align*}
\int_{\mathbb{S}^2}|\phi|^2&=\int_{\mathbb{S}^2_+}|\phi|^2\Delta_{\mathbb{S}^2}\rho_+
+\int_{\mathbb{S}^2_-}|\phi|^2\Delta_{\mathbb{S}^2}\rho_-\\
&=-\int_{\mathbb{S}^2_+}\nabla_{\mathbb{S}^2}|\phi|^2\cdot\nabla_{\mathbb{S}^2}\rho_+
-\int_{\mathbb{S}^2_-}\nabla_{\mathbb{S}^2}|\phi|^2\cdot\nabla_{\mathbb{S}^2}\rho_-\\
&\leq
2\int_{\mathbb{S}^2_+}|\phi||\nabla_{\mathbb{S}^2}\phi|+2\int_{\mathbb{S}^2_-}|\phi||\nabla_{\mathbb{S}^2}\phi|\\
&\leq
2(\int_{\mathbb{S}^2}|\phi|^2)^{1/2}(\int_{\mathbb{S}^2}|\nabla_{\mathbb{S}^2}\phi|^2)^{1/2},
\end{align*}
which gives the desired conclusion.

\end{proof}

Combining Propositions \ref{Sobprop1} and \ref{Sobprop2}, we achieve
for any $1\leq p<\infty$, there exists a universal constant $C$ such
that
\begin{align}
\|\Phi\|_{p+1}^{p+1}\leq C\|\nabla_{\mathbb{S}^2}\Phi\|_2^{p+1},\ \
\forall\ \Phi\in\Lambda,\label{Sob}
\end{align}
which is a Sobolev type estimate. By virtue of (\ref{Sob}), we argue
as before to see that the constrained variational problem in Theorem
\ref{S^2thm} satisfies $d_{\mathbb{S}^2}>0$.

We define
$$
J(t)=\int_{\mathbb{S}^2_+}\rho_+|\Phi|^2+\int_{\mathbb{S}^2_-}\rho_-|\Phi|^2.
$$
As in \cite{L}, we get that
\begin{align}
J^{''}(t)&\leq
4(\int_{\mathbb{S}^2_+}D^2\rho_+(\nabla_{\mathbb{S}^2}\Phi,\nabla_{\mathbb{S}^2}\bar{\Phi})+
\int_{\mathbb{S}^2_-}D^2\rho_-(\nabla_{\mathbb{S}^2}\Phi,\nabla_{\mathbb{S}^2}\bar{\Phi}))\label{Q**}\\
&\quad-2\frac{p-1}{p+1}\mathcal{P}(\Phi)\nonumber\\
&\leq4\int_{\mathbb{S}^2}|\nabla_{\mathbb{S}^2}\Phi|^2-2\frac{p-1}{p+1}\mathcal{P}(\Phi)\nonumber\\
&=8Q^{**}(\Phi).\nonumber
\end{align}
In view of (\ref{Q**}), following the proof of Theorem \ref{IIthm}
and Corollary \ref{IIcor} with $p>5$ and $Q$ replaced by $Q^{**}$,
we arrive at the conclusions of Theorem \ref{S^2thm} and Corollary
\ref{S^2cor}.

\begin{Rk}
The sharp threshold of blow-up and global existence for the
Schr\"{o}dinger system (\ref{system}) posed on $\mathbb{S}^2$ with
the initial data $\Phi_0\in (H^1(\mathbb{S}^2))^N\setminus\Lambda$
leaves open.
\end{Rk}

\section {Remarks on instability of
the solitary waves }

In this section, we are concerned with the strong instability of the
solitary waves. We only consider the Schr\"{o}dinger system
(\ref{system}) on $\mathbb{R}^n$. For any $\lambda_j>0$,
$j=1,...,N$, We define
$$
M_\lambda(\Phi)=(\sum_{j=1}^N\frac{\lambda_j}{2}\int_{\mathbb{R}^n}|\phi_j|^2)^{1/2}.
$$
Noticing that as a $L^2$ norm, $M_{\lambda}(\cdot)$ is equivalent to
$M(\cdot)$, the conclusions of Theorem \ref{IIthm} still work with
$M$ replaced by $M_\lambda$. Let $\gamma=2$ in Theorem \ref{IIthm},
and we are led to the variational minimizing problem
\begin{align}
d_{II}:=\inf_{\{{\bf u}\in (H^1(\mathbb{R}^n))^N\setminus\{0\};\
Q({\bf u})=0\}} (M_\lambda({\bf u}))^2+E({\bf u}).\label{min}
\end{align}
We have proved that $d_{II}>0$. In addition, we believe that under
some reasonable assumptions, this minimization can be attained by
some function ${\bf w}\in (H^1(\mathbb{R}^n))^N\setminus\{0\}$
subjected to an Euler-Lagrangian equation. Recently, there has been
some literature on this topic, see \cite{TC.L1,B.S}. For our
purpose, we make the following assumption.

{\bf Assumption:} the minimization of (\ref{min}) is attained by
some function ${\bf w}\in (H^1(\mathbb{R}^n))^N\setminus\{0\}$,
which satisfies
\begin{align}
\Delta w_j-\lambda_jw_j+\mu_j|w_j|^{p-1}w_j+\sum_{i\neq
j}\beta_{ij}|w_i|^{(p+1)/2}|w_j|^{(p-3)/2}w_j\label{w}
\end{align}
for $j=1,...,N$.

It's obvious that $\phi_j(x,t):=e^{i\lambda_jt}w_j(x)$ is a solution
to (\ref{system}), which is called a ground solitary wave
physically. Multiplying (\ref{w}) by $\bar{w_j}$ and integrating
over $\mathbb{R}^n$ by parts, we get that
\begin{align}
S({\bf w}):=\|\nabla{\bf w}\|_2^2+2(M_\lambda({\bf
w}))^2-\mathcal{P}({\bf w})=0.\label{S}
\end{align}
Multiplying (\ref{w}) by $x\cdot\nabla\bar{w_j}$ and integrating
over $\mathbb{R}^n$ by parts, we get the Pohozaev identity
\begin{align}
(\frac{n}{2}-1)\|\nabla {\bf w}\|_2^2+n(M_\lambda({\bf
w}))^2-\frac{n}{p+1}\mathcal{P}({\bf w})=0.\label{poho}
\end{align}
Combining (\ref{S}) and (\ref{poho}), we obtain
$$
Q(\bf w)=0.
$$

After these preliminaries, we prove the following instability
theorem.

\begin{Thm}
Suppose that $1+4/n<p<1+4/(n-2)^+$ and the above {\bf Assumption}
holds. Then for any $\epsilon>0$, there exists a $\Phi_0\in
(H^1(\mathbb{R}^n))^N$ with $\|\Phi_0-{\bf w}\|<\epsilon$ such that
the solution $\Phi$ to the Schr\"{o}dinger system (\ref{system})
with initial data $\Phi_0$ blows up in finite time.
\end{Thm}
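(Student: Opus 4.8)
The plan is to prove strong instability by perturbing the ground state ${\bf w}$ along a mass-preserving dilation and showing that the perturbed data land in the blow-up invariant set $K_-$ attached to Theorem \ref{IIthm} (taken with $\gamma=2$ and $M$ replaced by $M_\lambda$), so that the finite-time blow-up conclusion of that theorem applies verbatim. The starting observation is that, since ${\bf w}$ attains the minimum in (\ref{min}) and satisfies $Q({\bf w})=0$ (as derived from (\ref{S}) and (\ref{poho})), one has $(M_\lambda({\bf w}))^2+E({\bf w})=d_{II}$.

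First I would introduce, for $\theta>0$, the dilated family ${\bf w}^\theta$ with components $w_j^\theta(x)=\theta^{n/2}w_j(\theta x)$. A change of variables shows this scaling preserves the mass, $M_\lambda({\bf w}^\theta)=M_\lambda({\bf w})$, while $\|\nabla{\bf w}^\theta\|_2^2=\theta^2\|\nabla{\bf w}\|_2^2$ and $\mathcal{P}({\bf w}^\theta)=\theta^{n(p-1)/2}\mathcal{P}({\bf w})$, the last identity holding with the same power for both the self-focusing integrals $\int|w_j|^{p+1}$ and the coupling integrals $\int|w_i|^{(p+1)/2}|w_j|^{(p+1)/2}$. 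Writing $A:=\tfrac12\|\nabla{\bf w}\|_2^2$ and using $Q({\bf w})=0$ in the form $\tfrac{n(p-1)}{4(p+1)}\mathcal{P}({\bf w})=A$, I would record the two scalar functions
$$Q({\bf w}^\theta)=A\,(\theta^2-\theta^{n(p-1)/2}),\qquad g(\theta):=(M_\lambda({\bf w}^\theta))^2+E({\bf w}^\theta)=(M_\lambda({\bf w}))^2+A\theta^2-\tfrac{4}{n(p-1)}A\,\theta^{n(p-1)/2}.$$

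The decisive input is that $p>1+4/n$ forces $n(p-1)/2>2$. Hence $Q({\bf w}^\theta)=A\theta^2\bigl(1-\theta^{n(p-1)/2-2}\bigr)<0$ for every $\theta>1$. Likewise $g'(\theta)=2A\theta\bigl(1-\theta^{n(p-1)/2-2}\bigr)$ is strictly negative for $\theta>1$, and since $g(1)=(M_\lambda({\bf w}))^2+E({\bf w})=d_{II}$ we obtain $g(\theta)<d_{II}$ for all $\theta>1$. Thus for every $\theta>1$ the datum ${\bf w}^\theta$ satisfies simultaneously $Q({\bf w}^\theta)<0$ and $(M_\lambda({\bf w}^\theta))^2+E({\bf w}^\theta)<d_{II}$, i.e.\ ${\bf w}^\theta\in K_-$. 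Because $\theta\mapsto{\bf w}^\theta$ is continuous into $(H^1(\mathbb{R}^n))^N$ with ${\bf w}^1={\bf w}$, I can choose $\theta>1$ so close to $1$ that $\|{\bf w}^\theta-{\bf w}\|<\epsilon$, and set $\Phi_0={\bf w}^\theta$.

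It then remains to invoke Theorem \ref{IIthm}(B) (with $M_\lambda$, $\gamma=2$), which requires, besides $Q(\Phi_0)<0$ and the energy bound already secured, the weight hypothesis $|x|\Phi_0\in(L^2(\mathbb{R}^n))^N$. This is the one point needing genuine care, and I expect it to be the main obstacle: I would verify $|x|{\bf w}\in(L^2)^N$ — equivalently $|x|{\bf w}^\theta\in(L^2)^N$, since the weighted condition is dilation-invariant — from the exponential spatial decay of the ground state, which follows from standard elliptic decay estimates applied to (\ref{w}) using $\lambda_j>0$ (so that $-\Delta+\lambda_j$ has an exponentially decaying fundamental solution and the nonlinear terms are of higher order). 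Granting this, the solution with initial data $\Phi_0$ blows up in finite time, completing the argument; apart from the decay input, the only routine checks are the three scaling identities, in particular confirming that the coupling terms scale with the same exponent $\theta^{n(p-1)/2}$ as the pure powers.
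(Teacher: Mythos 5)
Your argument is correct, and it follows the same overall strategy as the paper (produce a one-parameter family of perturbations of ${\bf w}$ that lies in the blow-up set of Theorem \ref{IIthm} and converges to ${\bf w}$), but with a genuinely different choice of family. The paper takes the pure amplitude scaling $\Phi_0=k{\bf w}$, $k>1$: from $S({\bf w})=0$ it gets $S(k{\bf w})<0$ and $Q(k{\bf w})<0$ for $k>1$, and then uses the identity $\frac{d}{dk}\bigl((M_\lambda(k{\bf w}))^2+E(k{\bf w})\bigr)=\frac{1}{k}S(k{\bf w})<0$ together with $(M_\lambda({\bf w}))^2+E({\bf w})=d_{II}$ to conclude the energy constraint is strictly below $d_{II}$. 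You instead use the $L^2$-invariant dilation $w_j^\theta=\theta^{n/2}w_j(\theta x)$; your scaling identities are all correct (including the coupling terms, which carry total homogeneity $p+1$ and hence the same exponent $\theta^{n(p-1)/2}$), and since the mass is frozen along your family the monotonicity of $(M_\lambda)^2+E$ reduces to a clean virial computation needing only $Q({\bf w})=0$ and the minimality of ${\bf w}$ — you never invoke $S({\bf w})=0$, whereas the paper's route leans on it. What the paper's choice buys is slightly shorter algebra; what yours buys is independence from the Nehari identity and a family along which $M_\lambda$ is exactly conserved, which mirrors the dynamics more closely. One further point in your favor: you explicitly address the hypothesis $|x|\Phi_0\in(L^2(\mathbb{R}^n))^N$ required by Theorem \ref{IIthm}(B), reducing it to exponential decay of the ground state; the paper's own proof passes over this requirement in silence, so your treatment is the more careful one (though the decay claim is only sketched, it is standard for solutions of (\ref{w}) with $\lambda_j>0$).
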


\begin{proof}
From $S({\bf w})=0$ and $Q({\bf w})=0$, we have
$$
S(k{\bf w})<0,\ \ Q(k{\bf w})<0,\ \ \forall\ k>1.
$$
On the other hand, noticing that
$$
\frac{d}{dk}\left((M_\lambda(k{\bf w}))^2+E(k{\bf
w})\right)=\frac{1}{k}S(k{\bf w})<0,\ \ \forall\ k>1,
$$
we obtain simultaneously for all $k>1$ that
\begin{align*}
\label{system}\left\{\begin{array}{ll} (M_\lambda(k{\bf
w}))^2+E(k{\bf w})<d_{II},\\
Q(k{\bf w})<0,
\end{array}
\right.
\end{align*}
which suggest from Theorem \ref{IIthm} that the solution to
(\ref{system}) with initial data $\Phi_0=k{\bf w}$ blows up in
finite time. Then any $\Phi_0=k{\bf w}$ with $1<k<1+\epsilon$ is the
desired one.

\end{proof}

Similar discussions can be made about the Schr\"{o}dinger system
(\ref{system}) posed on $\mathbb{H}^n$ and $\mathbb{S}^2$. However,
due to the loss of the variational characterizations about the
ground solitary solutions when (\ref{system}) is considered on
manifolds, we prefer not to go deep in this direction.

\end{document}